\title{Estimation of the multifractional function and the stability index
 of linear multifractional stable processes\footnote{Declarations of interest:
none} }
\author{Thi To Nhu DANG}
\affil{University of Economics, The university of Danang\\ 71 Ngu Hanh Son Street, Danang, Vietnam. E-mail: nhudtt@due.edu.vn}
\theoremstyle{definition}
\newtheorem{defn}{Definition}[section]}
\newtheorem{thm}{Theorem}[section]
\newtheorem{remark}{Remark}[section]
\newtheorem{lem}{Lemma}[section]
\begin{document}

\maketitle
\begin{abstract}
 In this paper we are interested in multifractional stable processes where the self-similarity index $H$ is a function of time, in other words $H$ becomes time changing, and the stability index $\alpha$ is a constant.
Using $\beta$- negative power variations ($-1/2<\beta<0$), we propose estimators for the value of the  multifractional function $H$ at a fixed time $t_0$ and for $\alpha$ for two cases: multifractional Brownian motion ($\alpha=2$) and linear multifractional stable motion ($0<\alpha<2$). We get the consistency of our estimates for the underlying processes with the rate of convergence.
\end{abstract}
{\bf{Keywords:}} stable processes; multifractional processes; negative power variations; multifractional function.
\tableofcontents
\section{Introduction}
Multifractional processes have been presented in order to overcome some limitations for some applications of the fractional Brownian motion because of the constancy in time of its self-similarity index $H$. In these processes, the path regularity can now vary with the time variable $t$. The well-known example is mutilfractional Brownian motion which was introduced by A. Benassi {\it{et al.}} in \cite{Benassi1997} and independently by R.F. Peltier and J. L\'evy V\'ehel in \cite{Peltier1995}, where the self-similarity index $H$ of fractional Brownian motion is replaced by a multifractional function $H(t)$, permitting the Hurst index to change in a prescribed manner. This flexible stochastic model permits local regularity and long range dependence to be separated to give sample paths that are both highly correlated and irregular. In the last twenty years, many multifractional processes have been introduced and investigated, see e.g., \cite{Ayache2007}, \cite{Bardet2013}, \cite{Benassi1998}, \cite{Coeurjolly2005}, \cite{Coeurjolly2006}, \cite{Embrechts2002}, \cite{Falconer2009}, \cite{Vehel2009}, \cite{Guevel2013}, \cite{Yimin2008}, \cite{Taqqu1994}, \cite{Stoev2002}, \cite{Stoev2004}, \cite{Stoev2006}.

 Therefore, the statistical estimation of the multifractional function $H$ at a value of variable time  $t$ for multifractional processes, 
has been interested by many authors since about two decades. In the statistical literature on this topic, the value of $H(.)$ at a fixed time $t_0$, is built via  \cite{Ayache2015}, \cite{Ayache2004}, \cite{Benassi2000},  \cite{Benassi1998}, \cite{Coeurjolly2005}, \cite{Coeurjolly2006}, \cite{Lacaux2004},  \cite{Guevel2013}. One can mention the work of R.F. Peltier and J. L\'evy V\'ehel (see \cite{Peltier1995}) for the estimation of the multifractional function of a multifractional Brownian motion, based on the average variation of the sampled process. In the case of Gaussian multifractional Brownian motion, strongly consistent estimators of $H(t_0)$ has been presented in \cite{Benassi1998}, using generalized quadratic variations of this process. For a more general Gaussian setting than that of the latter process, the increment ratio method is used to get the estimation of $H(t_0)$, see e.g, \cite{Bardet2013}.  Recently, the corresponding
estimation problem of the stability function and the multifractional 
function for a class of multistable processes was considered in the
discussion paper of R. Le Gu\'{e}vel, see \cite{Guevel2013}, based
on some conditions that involve the consistency of the estimators. For
linear multifractional stable motions, in \cite{Ayache2015}, the
authors presented strongly consistent estimators of the multifractional function $H(.)$ and the stability index $\alpha$ using wavelet
coefficients when $\alpha\in(1,2)$ and $H(.)$ is a H\"{o}lder function
smooth enough, with values in a compact subinterval $[\underline{H},
\overline{H}]$ of $(1/\alpha, 1)$. One can refer to \cite{Ayache2017}, in the setting of the symmetric $\alpha$-stable non-anticipative moving average linear multifractional stable motion, for an almost surely and $L^p(\Omega), p\in (0,4]$, consistent estimator of the multifractional function $H(.)$ when $\alpha\in (1,2)$.

 The aim of this work is to construct consistent estimators for the value of the multifractional function $H(.)$ at an arbitrary fixed time $t_0$ and for the stability index $\alpha$, using $\beta$-negative power variations ($-1/2<\beta<0$) for  multifractional Brownian motions ($\alpha=2$) and linear multifractional stable motions ($0<\alpha<2$). This framework has been introduced recently in a paper by T.T.N. Dang and J. Istas (see \cite{DanIst2017}) to estimate the Hurst and the stability indices of a $H$-self-similar stable process, in the context $H$ and $\alpha$ are constants, based on the fact that $\beta$-negative power variations have  expectations and covariances for $-1/2</\beta<0$. The authors showed that using these variations, one can obtain the estimate of $H$ without a priori knowledge on $\alpha$ and vice versa, the estimator of $\alpha$ can be ascertained without assumptions on $H$. 
 In this paper, to estimate the value of $H(.)$ at $t_0$, using this new framework requires no a priori knowledge on $\alpha$, but only a weak a priori condition on the supremum of function $H(t)$. We also get the consistent estimator for the stability index $\alpha$ for the underlying processes. Moreover, the rate of convergence of our estimates is given.
  
  The remainder part of this article is organized as follows: in the next
section, we present the setting and main results to
construct the estimators for $H(.)$ at a fixed time $t_0$  and for $\alpha$ for two cases: multifractional Brownian motion ($\alpha=2$) and linear multifractional stable motion ($0<\alpha<2$). In Section \ref{Proofs}, we gather all the proofs of the main results presented in Section \ref{Settings and main results}. These proofs make use of several lemmas which are introduced and proved in Subsection \ref{auxiliary results}.
  
\section{Settings and main results}\label{Settings and main results}
\begin{defn}\emph{Linear multifractional stable motion and multifractional Brownian motion.}\\
Let $0<\alpha\leq 2$ and $H: U\rightarrow (0,1)$ be an infinite differentiable function on a closed interval $U\subset \mathbb{R}$. Let
\begin{align}\label{eq.1}
X(t)=\int_\mathbb{R} (|t-s|^{H(t)-1/\alpha}-|s|^{H(t)-1/\alpha})M_\alpha (ds)
\end{align}
where $M_\alpha$ is a symmetric $\alpha$-stable random measure on $\mathbb{R}$ which control measure $ds$ is Lebesgue measure.\\
 When $0<\alpha<2$, the process $X(t)$ is called {\it{a linear multifractional stable motion}} (see, e.g., \cite{Vehel2009}). The Gaussian case ($\alpha=2$) is covered where $M(du)$ is the standard Gaussian measure $W(du)$ on $\mathbb{R}$, the process then is called {\it{a multifractional Brownian motion}} (see, e.g., \cite{Stoev2006}).
 \end{defn}
 Let $X(t)$ be the process defined by (\ref{eq.1}) and $t_0$ be a fixed point in $U$. We now construct estimators of $H(t_0)$ and $\alpha$. \\
Let $L\geq 1, K\geq 1$ be fixed integers, $a=(a_0,\ldots,a_K)$ be a finite sequence with exactly $L$ vanishing first moments, that is for all $q\in   \{0,\ldots, L\}$, one has
\begin{align}\label{eq.2}
\sum\limits_{k=0}^K k^qa_k&=0,
\sum\limits_{k=0}^K k^{L+1}a_k\neq 0
\end{align}
with convention $0^0=1$. For example, here we can choose $K=L+1$ and 
\begin{align}\label{eq.3}
a_k=(-1)^{L+1-k}\frac{(L+1)!}{k!(L+1-k)!}.
\end{align}
 The discrete variations $\triangle_{p,n}X$ with respect to the sequence $a$ 
  are defined by
 \begin{align}\label{eq.4}
\triangle_{p,n}X=\sum\limits_{k=0}^K a_k X\left(\frac{k+p}{n}\right).
\end{align}
Let $\gamma$ be fixed such that 
\begin{align}\label{eq.5}
0<\limsup\limits_{t\in U} H(t)<\gamma<1.
\end{align}
 Define a set $\nu_{\gamma, n}(t_0)$ and its cardinal by
\begin{align}
\nu_{\gamma,n} (t_0)&=\{k\in\mathbb{Z}: \forall p=0,\ldots,K, |\frac{k+p}{n}-t_0|\leq \frac{1}{n^\gamma}\},\label{eq.6}\\
\upsilon_{\gamma,n} (t_0)&=\# \nu_{\gamma,n} (t_0).\label{eq.7}
\end{align}
Here we can choose $n\in\mathbb{N}$ large enough such that 
$$\{\frac{k+p}{n}, k\in\nu_{\gamma,n} (t_0), p=0,\ldots,K\} \subset U .$$
 Note that $\upsilon_{\gamma,n} (t_0)= [2n^{1-\gamma}-K] $
  or  $[2n^{1-\gamma}-K]+1$ depending on the parity of $[2n^{1-\gamma}-K]$.\\
   Let $\beta \in (-1/2,0)$ be fixed and
    \begin{align}
  V_{t_0,n}(\beta)&=\frac{1}{\upsilon_{\gamma,n} (t_0)}\sum\limits_{k\in \nu_{\gamma,n} (t_0)} |\triangle_{k,n}X|^\beta \label{eq.8}\\
  W_{t_0,n}(\beta)&=n^{\beta H(t_0)} V_{t_0,n}(\beta).\label{eq.9}
  \end{align}
 Let $\widehat{H}_n(t_0)$ is defined by
 \begin{align}\label{eq.10}
 \widehat{H}_n(t_0)&=\frac{1}{\beta}\log_2 \frac{V_{t_0,n/2}(\beta)}{V_{t_0,n}(\beta)}.
 \end{align}
 We will prove later that $\widehat{H}_n(t_0)$ is a consistent estimator of $H(t_0)$ of the multifractional stable process defined by (\ref{eq.1}) at a fixed time $t_0$ where $t_0\in U$.\\
 We now present a consistent estimator of $\alpha$.\\
We define
first auxiliary functions $\psi_{u,v},h_{u,v},\varphi_{u,v}$ before
introducing the estimator of $ \alpha$, where $u>v>0$.\\
Let $\psi_{u,v}$: $\mathbb{R^{+}}\times\mathbb{R^{+}}\rightarrow
\mathbb{R}$ be the function defined by
\begin{equation}
\label{eq.11}
\psi_{u,v}(x,y)=-v\ln x+u\ln y+C(u,v),
\end{equation}
where
\begin{align*}
C(u,v)=
&\frac{u-v}{2}\ln\pi+u\ln\left(
\Gamma(1+\frac{v}{2})\right) +v \ln\left( \Gamma(\frac{1-u}{2})\right)
\\
& -v \ln\left( \Gamma(1+\frac{u}{2})\right)
-u\ln\left( \Gamma(
\frac{1-v}{2})\right) .
\end{align*}
Let $h_{u,v}: (0,+\infty)\rightarrow(-\infty,0)$ be the function
defined by
\begin{equation}
\label{eq.12}
h_{u,v}(x)=u\ln\left( \Gamma(1+\frac{v}{x})\right) -
v\ln\left( \Gamma(1+
\frac{u}{x})\right).
\end{equation}
%
Let $\varphi_{u,v}:\mathbb{R}\rightarrow[0,+\infty)$ be the function
defined by
\begin{equation}
\label{eq.13}
\varphi_{u,v}(x)=
\begin{cases}
0
&\mbox{ if $x\geq0$}
\\
h^{-1}_{u,v}(x)
& \mbox{ if $x<0$}
\end{cases}
\end{equation}
where $h_{u,v}$ is defined as in (\ref{eq.12}). One can see \cite{DanIst2017} for the elementary results on functions $\psi_{u,v}, h_{u,v}, \varphi_{u,v}$.
Let $\beta_{1},\beta_{2}$ be in $\mathbb{R}$ such that $-1/2<\beta
_{1}<\beta_{2}<0$. The estimator of $ \alpha$ is defined by
\begin{equation}
\label{eq.14}
\widehat{\alpha}_{n}=\varphi_{-\beta_{1},-\beta_{2}}\left( \psi_{-\beta
_{1},-\beta_{2}}(V_{n}(\beta_{1}),V_{n}(\beta_{2}))\right) ,
\end{equation}
where $\psi_{u,v}, \varphi_{u,v}$ are defined as in (\ref{eq.11}) and
(\ref{eq.13}), respectively. \\
The principle results in this paper are presented in Theorems \ref{thm.1}, \ref{thm.2} and \ref{thm.3} as follows. Theorem \ref{thm.1} covers the case of multifractional Brownian motion ($\alpha=2$) and Theorem \ref{thm.2} covers the case of linear multifractional stable motion ($0<\alpha<2$) for a consistent estimator of the value of the multifractional function $H(.)$ at a fixed time $t_0$. Theorem \ref{thm.3} is devoted to a consistent estimator of the stability index $\alpha$ for those two cases. 
\subsection{Estimation of the multifractional function  \texorpdfstring{$H$}{}}\label{Estimation of the self-similarity functional parameter H}
We are in position to construct a consistent estimator, with the rate of convergence, for value of the multifractional function $H(.)$ at a fixed time $t_0$ for linear multifractional stable motion ($0<\alpha<2$) and multifractional Brownian motion ($\alpha=2$). 
We first give some definitions as follows.\\
For $n\in\mathbb{N}, k\in\mathbb{Z}, s\in U$, let  
\begin{align}
f(k,n,s)&= \sum\limits_{p=0}^K a_p|\frac{k+p}{n}-s|^{H(\frac{k+p}{n})-1/\alpha}
,\label{eq.21}\\
 g(k,n,s)&=\sum\limits_{p=0}^K a_p\left|\frac{k+p}{n}-s\right|^{H(t_0)-1/\alpha} \label{eq.22}.
\end{align}
Let $\beta \in (-1/2,0), t_0\in U$ be fixed, set
\begin{align}
M_{t_0}&=\left(\int\limits_{\mathbb{R}}\left| \sum\limits_{p=0}^K a_p (|p-s|^{H(t_0)-1/\alpha}\right|^\alpha ds\right)^{1/\alpha}.\label{eq.24}\\
M_{t_0,\beta}&=\frac{M_{t_0}^\beta C_\beta}{\sqrt{2\pi}}\int\limits_{\mathbb{R}}\frac{e^{-|y|^\alpha}}{|y|^{1+\beta}} dy, \label{eq.34}
\end{align}
where $C_\beta$ is defined by
\begin{align}\label{eq.35}
C_\beta=\frac{2^{\beta+1}\Gamma(\frac{\beta+1}{2})}{\Gamma(-\frac{\beta}{2})}.
\end{align}
\subsubsection{Multifractional Brownian motion  (\texorpdfstring {$\alpha=2$}{})}
We consider the case $\alpha=2$ of the multifractional stable process defined by (\ref{eq.1}): multifractional Brownian motion.
 For $t_0\in U$ fixed, we get a consistent estimator for $H(t_0)$ with rate of convergence $d_n$ defined by 
\begin{align}\label{eq.15}
d_n&=\begin{cases}
     n^{H(t_0)-\gamma}& \text{ if    } H(t_0)< \gamma\leq\frac{1+2H(t_0)}{3} \\
         n^{\frac{\gamma-1}{2}}& \text{ if    } \gamma>\frac{1+2H(t_0)}{3}. \\
      \end{cases}
\end{align}
The result is given via the following theorem.
\begin{thm}\label{thm.1}
Let $X$ be a multifractional Brownian motion defined by (\ref{eq.1}) with $\alpha=2$ and $M_\alpha(ds)$ is the standard Gaussian measure on $\mathbb{R}$.
Then we have
\begin{enumerate}
\item 
\begin{align}\label{eq.16}
\lim\limits_{n\to +\infty} W_{t_0,n}(\beta)&\stackrel{(\mathbb{P})}{=} M_{t_0,\beta}, W_{t_0,n}(\beta)-M_{t_0,\beta}=O_\mathbb{P}(d_n)
\end{align}
where $W_{t_0,n},M_{t_0,\beta}, d_n$ are defined by (\ref{eq.9}), (\ref{eq.34}) and (\ref{eq.15}), respectively.
\item 
\begin{align}\label{eq.17}
\lim\limits_{n\to+\infty}\widehat{H}_n(t_0)=H(t_0),\widehat{H}_n(t_0)-H(t_0)&=O_\mathbb{P}(d_n)
\end{align}
where $\widehat{H}_n(t_0)$ is defined by (\ref{eq.10}) and $O_{\mathbb{P}}$ is defined by:\\
$\bullet X_n=O_{\mathbb {P}}(1)$ iff for all $\epsilon>0$, 
there exists $M>0$ such that $\sup\limits_{n}\mathbb{P}(|X_n|>M)<\epsilon$,\\
$\bullet Y_n=O_{\mathbb {P}}(a_n)$ means $Y_n=a_nX_n$ with $X_n=O_{\mathbb {P}}(1)$.
\end{enumerate} 
\end{thm}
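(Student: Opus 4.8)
The plan is to establish part (1) first, then deduce part (2) by a short delta-method-style argument. For part (1), the key object is $V_{t_0,n}(\beta) = \upsilon_{\gamma,n}(t_0)^{-1}\sum_{k\in\nu_{\gamma,n}(t_0)}|\triangle_{k,n}X|^\beta$. Each increment $\triangle_{k,n}X$ is, in the Gaussian case, a centered Gaussian random variable, so $\mathbb{E}|\triangle_{k,n}X|^\beta = C_\beta\,\sigma_{k,n}^{\beta}$ where $\sigma_{k,n}^2 = \mathbb{E}(\triangle_{k,n}X)^2$ and $C_\beta$ is the constant in (\ref{eq.35}) (this is exactly the $\beta$-th absolute moment of a standard Gaussian, finite since $\beta>-1/2$). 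First I would show that for $k\in\nu_{\gamma,n}(t_0)$, i.e. for $(k+p)/n$ within $n^{-\gamma}$ of $t_0$, the variance $\sigma_{k,n}^2$ is well-approximated by $n^{-2H(t_0)}M_{t_0}^2$: one writes $\triangle_{k,n}X = \int_{\mathbb{R}} f(k,n,s)\,W(ds)$, compares $f(k,n,s)$ with $g(k,n,s)$ from (\ref{eq.21})--(\ref{eq.22}), rescales $s$, and controls the error from replacing $H((k+p)/n)$ by $H(t_0)$ using the infinite differentiability of $H$ and the fact that $|(k+p)/n-t_0|\le n^{-\gamma}$, plus $\limsup H<\gamma$ from (\ref{eq.5}) to guarantee the kernels stay integrable uniformly. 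This gives $n^{\beta H(t_0)}\mathbb{E}|\triangle_{k,n}X|^\beta \to M_{t_0}^\beta C_\beta$, and after checking that $M_{t_0,\beta}$ as defined in (\ref{eq.34}) equals $M_{t_0}^\beta C_\beta$ (the Gaussian integral $\frac{1}{\sqrt{2\pi}}\int e^{-|y|^2/?}\cdots$ collapsing to $1$ when $\alpha=2$ — this bookkeeping needs care), we get $\mathbb{E}W_{t_0,n}(\beta)\to M_{t_0,\beta}$ with an explicit $O(n^{H(t_0)-\gamma})$ bias coming from the Taylor expansion of $H$.

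Next I would control the fluctuation $W_{t_0,n}(\beta) - \mathbb{E}W_{t_0,n}(\beta)$ via a second-moment (variance) estimate. Here one needs $\mathrm{Var}(V_{t_0,n}(\beta)) = \upsilon_{\gamma,n}(t_0)^{-2}\sum_{k,k'}\mathrm{Cov}(|\triangle_{k,n}X|^\beta,|\triangle_{k',n}X|^\beta)$, and the covariance of $|\cdot|^\beta$ of two jointly Gaussian variables is bounded in terms of the normalized correlation $\rho_{k,k'} = \mathrm{Cov}(\triangle_{k,n}X,\triangle_{k',n}X)/(\sigma_{k,n}\sigma_{k',n})$; because the sequence $a$ has $L\ge1$ vanishing moments (\ref{eq.2}), the covariances of the discrete variations decay like $|k-k'|^{2H(t_0)-2-2L}$ or better, so the correlations are summable and $\mathrm{Var}(V_{t_0,n}(\beta)) = O(\upsilon_{\gamma,n}(t_0)^{-1}) = O(n^{\gamma-1})$. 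This yields a fluctuation term of order $n^{(\gamma-1)/2}$. Combining the $O(n^{H(t_0)-\gamma})$ bias with the $O(n^{(\gamma-1)/2})$ fluctuation and taking the worse of the two reproduces exactly the two-regime rate $d_n$ in (\ref{eq.15}): the crossover $\gamma = (1+2H(t_0))/3$ is precisely where $n^{H(t_0)-\gamma} = n^{(\gamma-1)/2}$. I expect these lemmas (variance asymptotics, covariance decay under vanishing moments) to be the ones proved in the auxiliary Subsection \ref{auxiliary results}, so I would cite them.

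For part (2), I would write $\widehat{H}_n(t_0) = \frac{1}{\beta}\log_2\frac{V_{t_0,n/2}(\beta)}{V_{t_0,n}(\beta)}$ and note that $V_{t_0,m}(\beta) = m^{-\beta H(t_0)}W_{t_0,m}(\beta)$, so the ratio equals $2^{\beta H(t_0)}\cdot\frac{W_{t_0,n/2}(\beta)}{W_{t_0,n}(\beta)}$. By part (1) both $W$-terms converge in probability to the same nonzero constant $M_{t_0,\beta}$, hence the ratio tends to $2^{\beta H(t_0)}$ and $\frac{1}{\beta}\log_2$ of it to $H(t_0)$. For the rate, I would apply a first-order expansion of $\log_2$ around $M_{t_0,\beta}$: writing $W_{t_0,n}(\beta) = M_{t_0,\beta} + O_\mathbb{P}(d_n)$ and similarly for $n/2$ (with $d_{n/2}$ of the same order as $d_n$), one gets $\widehat{H}_n(t_0) - H(t_0) = \frac{1}{\beta\ln 2\,M_{t_0,\beta}}\big(O_\mathbb{P}(d_{n/2}) - O_\mathbb{P}(d_n)\big) = O_\mathbb{P}(d_n)$, using that $M_{t_0,\beta}>0$ is bounded away from $0$. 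The main obstacle throughout is the uniform-in-$k$ control of the variance approximation $\sigma_{k,n}^2 \approx n^{-2H(t_0)}M_{t_0}^2$ and the associated covariance decay over the window $\nu_{\gamma,n}(t_0)$ — everything else is either a standard Gaussian moment computation or elementary asymptotics — so that is where I would concentrate the technical effort, leaning on the smoothness of $H$, the constraint (\ref{eq.5}), and the vanishing-moment property (\ref{eq.2}) of $a$.
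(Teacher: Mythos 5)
Your proposal follows essentially the same route as the paper: a scale/variance approximation $\sigma_{k,n}\approx M_{t_0}$ over the window $\nu_{\gamma,n}(t_0)$ giving the bias, a covariance bound for $|\cdot|^\beta$ of correlated Gaussians with decay $|k-k'|^{2H(t_0)-2(L+1)}$ from the vanishing moments giving the fluctuation, a Chebyshev/Markov step, and then the continuous-mapping/delta-method argument for $\widehat H_n(t_0)$ — these are exactly Lemmas \ref{lem.1.18}--\ref{lem.1.22} and the proof in Subsection \ref{Proof of Theorem 1}. The only cosmetic difference is bookkeeping: in the paper the $n^{H(t_0)-\gamma}$ contribution to $d_n$ arises from the non-decaying $n^{2(H(t_0)-\gamma)}$ term inside the covariance bound (the bias itself is $O(n^{2(H(t_0)-\gamma)})$), whereas you attribute it to the bias; the resulting rate is the same.
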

\begin{proof}
See Subsection \ref{Proof of Theorem 1}
\end{proof}
\subsubsection{Linear multifractional stable motion (\texorpdfstring {$0<\alpha<2$}{})}
We now investigate the case $0<\alpha<2$ of the linear multifractional stable motion defined by (\ref{eq.1}). Let $t_0$ be a fixed point in $U$. Let 
\begin{align}\label{eq.18}
d_n&=\begin{cases}
     n^{\frac{\alpha(H(t_0)-\gamma)}{4}}& \text{ if    } H(t_0)<L+1-\frac{2}{\alpha} \text{ and } H(t_0)< \gamma\leq\frac{2+\alpha H(t_0)}{2+\alpha}, \\
   n^{\frac{\gamma-1}{2}}& \text{ if    } H(t_0)<L+1-\frac{2}{\alpha} \text{ and } \gamma>\frac{2+\alpha H(t_0)}{2+\alpha}, \\   
     n^{\frac{\alpha(1-\gamma)(H(t_0)-(L+1))}{4}}& \text{ if    } H(t_0)>L+1-\frac{2}{\alpha} \text{ and } \gamma\geq\frac{L+1}{L+2-H(t_0)}, \\
     n^{\frac{\alpha(H(t_0)-\gamma)}{4}}& \text{ if    } H(t_0)>L+1-\frac{2}{\alpha} \text{ and } H(t_0)<\gamma<\frac{L+1}{L+2-H(t_0)}, \\      
  n^{\frac{\alpha(H(t_0)-\gamma)}{4}}& \text{ if    } H(t_0)=L+1-\frac{2}{\alpha} \text{ and } H(t_0)<\gamma<\frac{(L+1)\alpha}{2+\alpha}, \\
  n^{\frac{\gamma-1}{2}}\sqrt{\ln(n)}           & \text{ if    } H(t_0)=L+1-\frac{2}{\alpha} \text{ and } \gamma\geq\frac{(L+1)\alpha}{2+\alpha}.
      \end{cases}
\end{align}
We obtain a consistent estimator of $H(t_0)$ via the following theorem.
\begin{thm}\label{thm.2}
Let $X$ be a linear multifractional stable motion defined by (\ref{eq.1}) with $0<\alpha<2$.
For $t_0\in U$ fixed, we have
\begin{enumerate}
\item \begin{align}\label{eq.19}
\lim\limits_{n\to +\infty} W_{t_0,n}(\beta)&\stackrel{(\mathbb{P})}{=} M_{t_0,\beta}, W_{t_0,n}(\beta)-M_{t_0,\beta}=O_\mathbb{P}(d_n)
\end{align}
where $W_{t_0,n}, M_{t_0},\beta, d_n$ are defined by (\ref{eq.9}), (\ref{eq.34}) and (\ref{eq.18}), respectively.
\item 
\begin{align}\label{eq.20}
\lim\limits_{n\to+\infty}\widehat{H}_n(t_0)=H(t_0),\widehat{H}_n(t_0)-H(t_0)&=O_\mathbb{P}(d_n)
\end{align}
where $\widehat{H}_n(t_0)$ is defined by (\ref{eq.10}).
\end{enumerate}
\end{thm}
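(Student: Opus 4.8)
The plan is to follow the two–step scheme already used for Theorem~\ref{thm.1}: decompose $W_{t_0,n}(\beta)-M_{t_0,\beta}$ into a deterministic bias and a centred fluctuation, bound each by $O(d_n)$, and then deduce the statement on $\widehat H_n(t_0)$. The only genuinely new ingredient compared with the Gaussian case is that the moment and covariance computations must now be carried out with $S\alpha S$ laws, $\alpha<2$. Fix $t_0\in U$. Each increment $\triangle_{k,n}X$ is $S\alpha S$ with scale $\sigma_{k,n}=\Vert K_{k,n}\Vert_{L^\alpha(ds)}$, where $K_{k,n}(s)=\sum_{p=0}^K a_p\big(|\tfrac{k+p}{n}-s|^{H(\frac{k+p}{n})-1/\alpha}-|s|^{H(\frac{k+p}{n})-1/\alpha}\big)$; this kernel is in $L^\alpha$ and $\sigma_{k,n}>0$ because $0<H(\cdot)<1$. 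I would first record, from \cite{DanIst2017}, that $\mathbb{E}\,|Y|^{\beta}=\kappa_\beta\,\sigma^{\beta}$ whenever $Y$ is $S\alpha S$ with scale $\sigma$, where $\kappa_\beta=\frac{C_\beta}{\sqrt{2\pi}}\int_{\mathbb{R}}\frac{e^{-|y|^{\alpha}}}{|y|^{1+\beta}}\,dy$, so that $M_{t_0,\beta}=\kappa_\beta M_{t_0}^{\beta}$ in the notation of (\ref{eq.34}); this integral is finite because $\beta>-1$, and the second moment $\mathbb{E}\,|Y|^{2\beta}$ is finite precisely because $\beta>-1/2$, a fact needed in the variance bound. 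Finally, by the change of variables $v=ns-k$ and the $L$ vanishing moments of $a$, the ``frozen'' kernel $g(k,n,\cdot)$ of (\ref{eq.22}) satisfies $\Vert g(k,n,\cdot)\Vert_{L^\alpha}=n^{-H(t_0)}M_{t_0}$ for every admissible $k$, so $n^{H(t_0)}\sigma_{k,n}$ should be close to $M_{t_0}$.

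\emph{Bias.} Since $\mathbb{E}\,W_{t_0,n}(\beta)-M_{t_0,\beta}=\kappa_\beta\,\upsilon_{\gamma,n}(t_0)^{-1}\sum_{k\in\nu_{\gamma,n}(t_0)}\big((n^{H(t_0)}\sigma_{k,n})^{\beta}-M_{t_0}^{\beta}\big)$, it suffices to bound $\sup_{k}|n^{H(t_0)}\sigma_{k,n}-M_{t_0}|$, and by the reverse triangle inequality in $L^\alpha$ this is at most $\sup_{k}n^{H(t_0)}\Vert K_{k,n}-g(k,n,\cdot)\Vert_{L^\alpha}$. Here comes the first technical block (to be done in Subsection~\ref{auxiliary results}): split the $s$–integral into (i) neighbourhoods of the singular points $\{\tfrac{k+p}{n}\}$ of radius $\asymp n^{-1}$, where $K_{k,n}-g$ is dominated by $\sup_{p}|H(\tfrac{k+p}{n})-H(t_0)|\cdot\ln n\le C n^{-\gamma}\ln n$ times $|g|$; (ii) an annulus $n^{-1}\lesssim|s-t_0|\lesssim 1$, handled by Taylor–expanding the exponent differences $H(\tfrac{k+p}{n})-H(t_0)$ and invoking the $L$ vanishing moments; and (iii) the tails $|s|\gtrsim 1$, where the base–point terms $|s|^{H(\frac{k+p}{n})-1/\alpha}$ do \emph{not} cancel (because $H$ is non–constant), so one must check that they combine with the main terms and make $K_{k,n}$ decay like $|s|^{H(t_0)-1/\alpha-1}$ modulo logarithms — this is where $H(t_0)<1$ is essential for $L^\alpha$–integrability, and where the vanishing moments again yield the extra powers of $1/n$. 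The outcome is $\sup_{k}|n^{H(t_0)}\sigma_{k,n}-M_{t_0}|=O(n^{-\gamma}\ln n)$, whence, checking each of the six cases of (\ref{eq.18}) (using $H(t_0)>0$, and for the $n^{(\gamma-1)/2}$– and $n^{(\gamma-1)/2}\sqrt{\ln n}$–cases the inequality $\gamma>1/3$ forced by the case hypotheses), $\mathbb{E}\,W_{t_0,n}(\beta)-M_{t_0,\beta}=O(d_n)$.

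\emph{Fluctuation and conclusion of part~1.} Writing $\mathrm{Var}\,W_{t_0,n}(\beta)=\upsilon_{\gamma,n}(t_0)^{-2}n^{2\beta H(t_0)}\sum_{k,k'}\mathrm{Cov}\big(|\triangle_{k,n}X|^{\beta},|\triangle_{k',n}X|^{\beta}\big)$, the core estimate is $\big|\mathrm{Cov}\big(|\triangle_{k,n}X|^{\beta},|\triangle_{k',n}X|^{\beta}\big)\big|\le C\,n^{-2\beta H(t_0)}\,(1+|k-k'|)^{-\rho}$ with an exponent $\rho=\rho(\alpha,H(t_0),L)$ that is $>1$ iff $H(t_0)<L+1-2/\alpha$, equal to $1$ iff $H(t_0)=L+1-2/\alpha$, and in $(0,1)$ otherwise; moreover, since only indices with $|\tfrac{k}{n}-t_0|,|\tfrac{k'}{n}-t_0|\le n^{-\gamma}$ occur, a sharper truncated version is available for $\gamma$ large, which is what produces the split at $\gamma=\tfrac{L+1}{L+2-H(t_0)}$. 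For $\alpha=2$ such a bound follows (with a larger $\rho$) from the Hermite–rank expansion of $|\cdot|^\beta$, as in Theorem~\ref{thm.1}, but for $\alpha<2$ I would work instead with the joint density of the $S\alpha S$ vector $(\triangle_{k,n}X,\triangle_{k',n}X)$ — equivalently a LePage–type series representation — writing the covariance as the integral of $|u|^{\beta}|v|^{\beta}$ against $f_{(U,V)}-f_U\otimes f_V$ and estimating the latter through the covariation/codifference of the two increments, which for the frozen linear fractional stable motion decays polynomially in $|k-k'|$ by the $L$ vanishing moments; the residual singularity of $|u|^{\beta}|v|^{\beta}$ at the origin is integrable precisely because $2\beta>-1$. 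Summing, $\mathrm{Var}\,W_{t_0,n}(\beta)\le C\,\upsilon_{\gamma,n}(t_0)^{-1}\sum_{|m|\le\upsilon_{\gamma,n}(t_0)}(1+|m|)^{-\rho}$, which is $O(n^{\gamma-1})$ if $\rho>1$, $O(n^{\gamma-1}\ln n)$ if $\rho=1$, and $O(\upsilon_{\gamma,n}(t_0)^{-\rho})=O(n^{-\rho(1-\gamma)})$ if $\rho<1$; taking square roots and matching exponents (together with the truncated bound when $\gamma$ is large) gives $\sqrt{\mathrm{Var}\,W_{t_0,n}(\beta)}=O(d_n)$ in every case of (\ref{eq.18}). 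Adding the bias and applying Chebyshev's inequality (i.e.\ the definition of $O_{\mathbb{P}}$) yields $W_{t_0,n}(\beta)-M_{t_0,\beta}=O_{\mathbb{P}}(d_n)$; since $d_n\to0$ this also gives the convergence in probability, which is (\ref{eq.19}).

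\emph{Part~2 and the main obstacle.} From (\ref{eq.8})--(\ref{eq.9}), $V_{t_0,n/2}(\beta)/V_{t_0,n}(\beta)=2^{\beta H(t_0)}\,W_{t_0,n/2}(\beta)/W_{t_0,n}(\beta)$, so by (\ref{eq.10}), $\widehat H_n(t_0)=H(t_0)+\tfrac1\beta\log_2\!\big(W_{t_0,n/2}(\beta)/W_{t_0,n}(\beta)\big)$. Applying (\ref{eq.19}) at resolutions $n$ and $n/2$ (and noting $d_{n/2}\asymp d_n$) gives $W_{t_0,n}(\beta),W_{t_0,n/2}(\beta)=M_{t_0,\beta}+O_{\mathbb{P}}(d_n)$ with $M_{t_0,\beta}>0$ (because $C_\beta>0$ and the defining integral is positive and finite); hence the ratio is $1+O_{\mathbb{P}}(d_n)$, $\log_2\big(1+O_{\mathbb{P}}(d_n)\big)=O_{\mathbb{P}}(d_n)$, and therefore $\widehat H_n(t_0)-H(t_0)=O_{\mathbb{P}}(d_n)$ and $\widehat H_n(t_0)\to H(t_0)$ in probability, which is (\ref{eq.20}). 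The hardest part is the covariance estimate for $\alpha<2$ in the fluctuation step: there is no orthogonal expansion of $|\cdot|^\beta$ to exploit, so one must quantify the dependence of two jointly $S\alpha S$ increments directly and see how the $L$ vanishing moments turn it into a polynomial decay in $|k-k'|$, the threshold $H(t_0)=L+1-2/\alpha$ being exactly what separates the summable from the non–summable regime and hence generating all the case distinctions in $d_n$. A secondary difficulty is region~(iii) of the bias step: because $H$ varies, the base–point terms no longer cancel and one has to check by hand that $K_{k,n}$ still lies in $L^\alpha$ with, after normalization, a norm only $O(n^{-\gamma}\ln n)$ away from that of the frozen kernel.
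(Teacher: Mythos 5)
Your overall architecture is the same as the paper's: split $W_{t_0,n}(\beta)-M_{t_0,\beta}$ into a bias term controlled by the $L^\alpha$-distance between the true kernel and the ``frozen'' kernel $g(k,n,\cdot)$, plus a centred fluctuation controlled by a covariance bound that decays in $|k-k'|$ with the threshold $H(t_0)=L+1-2/\alpha$ (your $\rho=\alpha(L+1-H(t_0))/2$ is exactly the exponent produced by Lemma 3.6 of \cite{Jacques2012} in the paper's Lemma \ref{lem.1.21}), near-diagonal terms handled by Cauchy--Schwarz and finiteness of $\mathbb{E}|\cdot|^{2\beta}$ (the paper's Lemma \ref{lem.1.20}), and a delta-method step for $\widehat H_n(t_0)$. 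Two small imprecisions: for $\alpha<1$ the reverse triangle inequality in $L^\alpha$ does not hold as stated (one must work with $\alpha$-th powers and then undo them by the mean value theorem, which is what the paper does via Lemma 4.7.2 of \cite{Taqqu1994}); and your density/codifference argument for the covariance needs the normalized dependence measure $\eta_{k,k'}=\int|f(k,n,s)f(k',n,s)|^{\alpha/2}ds/(\|f(k,n,\cdot)\|_\alpha\|f(k',n,\cdot)\|_\alpha)^{\alpha/2}$ to be bounded away from $1$ uniformly for $|k-k'|$ large and $n$ large, a hypothesis you do not verify (the paper does, before invoking Theorem 4.2 of \cite{DanIst2017}).

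The genuine gap is your ``core estimate'' $|\mathrm{Cov}(|\triangle_{k,n}X|^\beta,|\triangle_{k',n}X|^\beta)|\le C\,n^{-2\beta H(t_0)}(1+|k-k'|)^{-\rho}$. This pure power-law decay holds for the frozen kernels $g$, but not for the true kernels $f$: when you expand $\int|f(k,n,s)f(k',n,s)|^{\alpha/2}ds$ via $f=(f-g)+g$, the cross terms involving $f-g$ are only small in $n$ (of order $n^{-\alpha\gamma}$ and $n^{-\alpha(H(t_0)+\gamma)/2}$ after Cauchy--Schwarz) and do not decay in $|k-k'|$ at all. The correct bound is therefore of the form
\begin{align*}
|\mathrm{Cov}|\;\le\; C\,n^{-2\beta H(t_0)}\Bigl(n^{\alpha(H(t_0)-\gamma)}+n^{\frac{\alpha(H(t_0)-\gamma)}{2}}+|k-k'|^{-\rho}\Bigr),
\end{align*}
which is the paper's (\ref{eq.49}). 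This is not a cosmetic point: the additive terms $n^{\alpha(H(t_0)-\gamma)/2}$ are exactly what generate the first, second and fourth cases of $d_n$ in (\ref{eq.18}) (the splits at $\gamma=\tfrac{2+\alpha H(t_0)}{2+\alpha}$ and $\gamma=\tfrac{L+1}{L+2-H(t_0)}$ come from comparing $n^{\alpha(H(t_0)-\gamma)/2}$ with $n^{\gamma-1}$ and with $n^{-\rho(1-\gamma)}$ respectively), so with your bound as written you could not ``match exponents in every case of (\ref{eq.18})''; your attribution of the $\gamma=\tfrac{L+1}{L+2-H(t_0)}$ split to a ``sharper truncated version'' is a symptom of this omission. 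The repair is entirely within your own framework --- apply the $f=(f-g)+g$ decomposition, which you already use for the bias, inside the covariance integral --- and then the rest of your argument (summation over $|m|\le\upsilon_{\gamma,n}(t_0)$, Chebyshev, and the ratio computation for part 2) goes through as in the paper.
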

\begin{proof}
See Subsection \ref{Proof of Theorem 2}.
\end{proof}
\subsection{Estimation of the stable index \texorpdfstring {$\alpha$}{}}\label{Estimation of the stable index}
For the multifractional stable process defined by (\ref{eq.1}), we consider the stable index $\alpha$ as a constant and $\alpha \in (0,2]$. Now we will present a consistent estimator for $\alpha$ in this case. Recall that $\widehat{\alpha}_n$ is defined by (\ref{eq.14}):
\begin{equation*}
\widehat{\alpha}_n=\varphi_{-\beta_1,-\beta_2}\left(\psi_{-\beta_1,-\beta_2}(V_{t_0,n}(\beta_1),V_{t_0,n}(\beta_2))\right).
\end{equation*}
The results on estimating $\alpha$ is presented via the following theorem.
\begin{thm}\label{thm.3}
Let $X$ be a multifractional stable process defined by (\ref{eq.1}). Let $t_0\in U$ be fixed, then $\widehat{\alpha}_n$ is a consistent estimator of $\alpha$, moreover $\widehat{\alpha}_n-\alpha=O_\mathbb{P}(d_n)$, where  $\widehat{\alpha}_n$ is defined by (\ref{eq.14}), $d_n$ is defined by (\ref{eq.15}) for the case of multifractional Brownian motion ($\alpha=2$) and $d_n$ is defined by (\ref{eq.18}) for the case of linear multifractional stable motion ($ 0<\alpha<2$).
\end{thm}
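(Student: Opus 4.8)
The plan is to reduce the convergence of $\widehat{\alpha}_n = \varphi_{-\beta_1,-\beta_2}\bigl(\psi_{-\beta_1,-\beta_2}(V_{t_0,n}(\beta_1),V_{t_0,n}(\beta_2))\bigr)$ to the already-established asymptotic behaviour of the normalized variations $W_{t_0,n}(\beta) = n^{\beta H(t_0)} V_{t_0,n}(\beta)$ from Theorems \ref{thm.1} and \ref{thm.2}. The key algebraic observation is that the multiplicative factor $n^{\beta_j H(t_0)}$ is exactly what the functions $\psi_{u,v}$ and $h_{u,v}$ are designed to cancel: since $\psi_{-\beta_1,-\beta_2}$ is built from $-\beta_2 \ln x + (-\beta_1)\ln y$ plus a constant, feeding in $V_{t_0,n}(\beta_1) = n^{-\beta_1 H(t_0)} W_{t_0,n}(\beta_1)$ and $V_{t_0,n}(\beta_2) = n^{-\beta_2 H(t_0)} W_{t_0,n}(\beta_2)$ produces a term $\bigl(-\beta_2(-\beta_1) + (-\beta_1)(-\beta_2)\bigr) H(t_0)\ln n = 0$, so the $H(t_0)$-dependence and the explicit $n$-dependence disappear and we are left with $\psi_{-\beta_1,-\beta_2}(W_{t_0,n}(\beta_1),W_{t_0,n}(\beta_2))$. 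First I would make this cancellation explicit.

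Next I would invoke the elementary properties of $\psi_{u,v}, h_{u,v}, \varphi_{u,v}$ recorded in \cite{DanIst2017}: one has the identity $\psi_{-\beta_1,-\beta_2}\bigl(M_{t_0,\beta_1}, M_{t_0,\beta_2}\bigr) = h_{-\beta_1,-\beta_2}(\alpha)$ (this is the computation that, with the explicit constant $C(u,v)$, matches the moments $M_{t_0,\beta} = \frac{M_{t_0}^\beta C_\beta}{\sqrt{2\pi}}\int_{\mathbb R} \frac{e^{-|y|^\alpha}}{|y|^{1+\beta}}\,dy$ of $|\triangle_{k,n}X|^\beta$ — the $M_{t_0}^\beta$ factors cancel in $\psi$ just as the $n^{\beta H(t_0)}$ factors did, and the $\Gamma$-ratios reassemble into $h_{-\beta_1,-\beta_2}(\alpha)$), and $h_{-\beta_1,-\beta_2}$ is a strictly monotone bijection from $(0,+\infty)$ onto $(-\infty,0)$ with $\varphi_{-\beta_1,-\beta_2}$ its inverse on $(-\infty,0)$. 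Hence $\varphi_{-\beta_1,-\beta_2}\bigl(\psi_{-\beta_1,-\beta_2}(M_{t_0,\beta_1},M_{t_0,\beta_2})\bigr) = \alpha$. Combining this with Theorems \ref{thm.1}–\ref{thm.2}, which give $W_{t_0,n}(\beta_j) \xrightarrow{\mathbb P} M_{t_0,\beta_j}$ with $W_{t_0,n}(\beta_j) - M_{t_0,\beta_j} = O_{\mathbb P}(d_n)$, the continuity of $\psi_{-\beta_1,-\beta_2}$ (away from the axes, where the limit point lies) and of $\varphi_{-\beta_1,-\beta_2}$ yields consistency $\widehat{\alpha}_n \xrightarrow{\mathbb P} \alpha$ by the continuous mapping theorem.

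For the rate, I would upgrade continuity to a local Lipschitz bound: $\psi_{-\beta_1,-\beta_2}$ is smooth at $(M_{t_0,\beta_1},M_{t_0,\beta_2})$ since both coordinates are strictly positive, and $\varphi_{-\beta_1,-\beta_2} = h_{-\beta_1,-\beta_2}^{-1}$ is locally Lipschitz near $h_{-\beta_1,-\beta_2}(\alpha)$ because $h_{-\beta_1,-\beta_2}'(\alpha) \neq 0$ (strict monotonicity, a fact from \cite{DanIst2017}). A first-order Taylor expansion then gives $\widehat{\alpha}_n - \alpha = O_{\mathbb P}\bigl(\max_j |W_{t_0,n}(\beta_j) - M_{t_0,\beta_j}|\bigr) = O_{\mathbb P}(d_n)$, where $d_n$ is as in \eqref{eq.15} when $\alpha = 2$ and as in \eqref{eq.18} when $0<\alpha<2$; if $\beta_1$ and $\beta_2$ give formally different $d_n$ one takes the slower of the two, but since the dominant terms in \eqref{eq.15} and \eqref{eq.18} do not depend on $\beta$ this is harmless. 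I expect the only genuine obstacle to be bookkeeping rather than substance: one must check that the argument $\psi_{-\beta_1,-\beta_2}(V_{t_0,n}(\beta_1),V_{t_0,n}(\beta_2))$ lands in the branch $x<0$ of $\varphi_{-\beta_1,-\beta_2}$ with probability tending to one (so that $\widehat{\alpha}_n$ is not artificially truncated to $0$) — this follows because its limit $h_{-\beta_1,-\beta_2}(\alpha) < 0$ lies in the open set $(-\infty,0)$, so for $n$ large the event $\{\psi_{-\beta_1,-\beta_2}(\cdots) < 0\}$ has probability arbitrarily close to $1$, and on the complementary event the error is still $O_{\mathbb P}(1)$ with vanishing probability, which does not affect an $O_{\mathbb P}(d_n)$ statement. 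I would close by remarking that the argument is uniform across $\alpha \in (0,2]$, so the two cases are handled simultaneously once the corresponding $W_{t_0,n}$ asymptotics are in hand.
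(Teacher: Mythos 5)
Your proposal is correct and follows essentially the same route as the paper: cancellation of the $n^{\beta H(t_0)}$ factors inside $\psi_{-\beta_1,-\beta_2}$, the identity $\psi_{-\beta_1,-\beta_2}(M_{t_0,\beta_1},M_{t_0,\beta_2})=h_{-\beta_1,-\beta_2}(\alpha)$ via the Gamma-function evaluation of $M_{t_0,\beta}$, invertibility of $h_{-\beta_1,-\beta_2}$, and then the continuous mapping theorem plus a delta-method argument (the paper's Lemma 4.10 of the cited reference) applied to the rates from Theorems \ref{thm.1} and \ref{thm.2}. Your explicit check that the argument of $\varphi_{-\beta_1,-\beta_2}$ falls in the branch $x<0$ with probability tending to one is a detail the paper leaves implicit, but it does not change the substance of the argument.
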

\begin{proof}
See Subsection \ref{Proof of Theorem 3}.
\end{proof}
\section{Proofs}\label{Proofs}
This Section is devoted to the proofs of theorems presented in Section \ref{Settings and main results}. 
\subsection{Auxiliary results}\label{auxiliary results}
We present here some results related to discrete variations of linear multifractional stable motion and multifractional Brownian motion. These results will be used to prove main results.
\begin{lem}\label{lem.1.18}
Let $X$ be a multifractional stable process defined by (\ref{eq.1}). For $0<\alpha\leq 2$ and $k\in\nu_{\gamma,n}(t_0)$, let 
\begin{align}
\sigma_{k,n}(t_0)&=||\frac{\triangle_{k,n}X}{n^{-H(t_0)}}||_\alpha, \label{eq.23} 
\end{align}
with the notation $||f||_{\alpha}=(\int\limits_{S}|f(s)|^{\alpha}\mu (ds))^{1/\alpha}$, where $f\in L^{\alpha}(S,\mu)$.
Then 
\begin{align}\label{eq.25}
|\sigma_{k,n}(t_0)-M_{t_0}|=O(n^{\alpha(H(t_0)-\gamma)\wedge (H(t_0)-\gamma)}),
\end{align}
 where $\gamma, M_{t_0}$ are defined by (\ref{eq.5}) and (\ref{eq.24}), respectively. 
\end{lem}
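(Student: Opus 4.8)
The plan is to compare the $L^\alpha$-norm $\sigma_{k,n}(t_0)$ with the constant $M_{t_0}$ by unwinding the integral representation of $\triangle_{k,n}X$ coming from (\ref{eq.1}). First I would write, using the stochastic integral representation and the defining property that $a=(a_0,\dots,a_K)$ kills the first $L$ moments (\ref{eq.2}),
$$
\sigma_{k,n}(t_0)^\alpha = n^{\alpha H(t_0)}\int_\mathbb{R}\Bigl|\sum_{p=0}^K a_p\,\Bigl|\tfrac{k+p}{n}-s\Bigr|^{H(\frac{k+p}{n})-1/\alpha}\Bigr|^\alpha ds,
$$
which after the change of variable $s\mapsto (k+s')/n$ (so that $s' = ns-k$) becomes
$$
\sigma_{k,n}(t_0)^\alpha = \int_\mathbb{R}\Bigl|\sum_{p=0}^K a_p\,|p-s'|^{H(\frac{k+p}{n})-1/\alpha}\,n^{H(t_0)-H(\frac{k+p}{n})}\Bigr|^\alpha ds'.
$$
The target constant $M_{t_0}^\alpha$ is exactly the same integral with every exponent $H(\frac{k+p}{n})$ replaced by $H(t_0)$ and every factor $n^{H(t_0)-H(\frac{k+p}{n})}$ replaced by $1$. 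So the whole lemma reduces to estimating how much the inner sum changes when $H(\frac{k+p}{n})$ is replaced by $H(t_0)$, uniformly for $k\in\nu_{\gamma,n}(t_0)$.

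The key quantitative input is the localization built into $\nu_{\gamma,n}(t_0)$: by (\ref{eq.6}), for $k\in\nu_{\gamma,n}(t_0)$ and each $p$ we have $|\frac{k+p}{n}-t_0|\le n^{-\gamma}$, hence since $H$ is $C^\infty$ (in particular Lipschitz) on $U$,
$$
\bigl|H(\tfrac{k+p}{n})-H(t_0)\bigr| \le \|H'\|_{\infty,U}\, n^{-\gamma},
$$
and consequently $|n^{H(t_0)-H(\frac{k+p}{n})}-1| = O(n^{-\gamma}\log n) = O(n^{-\gamma'})$ for any $\gamma'<\gamma$; likewise $\bigl||p-s'|^{H(\frac{k+p}{n})-1/\alpha}-|p-s'|^{H(t_0)-1/\alpha}\bigr|$ is controlled by $n^{-\gamma}$ times a factor involving $\log|p-s'|$ near $s'=p$ and a polynomial growth at infinity. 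Next I would split the integral into the region $|s'|\le R$ (for some large fixed $R$) and $|s'|>R$. On the bounded region the pointwise bounds above, together with integrability of the logarithmic singularities (coming from $H(t_0)-1/\alpha>-1/\alpha\ge -1$, so $|p-s'|$ is raised to a power $>-1$), give a contribution $O(n^{-\gamma})$. On the tail $|s'|>R$ the cancellation (\ref{eq.2}) makes the summand decay like $|s'|^{H(\cdot)-1/\alpha-(L+1)}$, and since $H(\cdot)<\gamma<1$ and $L\ge 1$ this is integrable in $L^\alpha$; controlling the difference of the two tails again costs a factor $n^{-\gamma}$. Finally I would use the elementary inequality $|x^\alpha-y^\alpha|\lesssim |x-y|(x^{\alpha-1}+y^{\alpha-1})$ when $\alpha\ge 1$, and $|x^\alpha-y^\alpha|\le|x-y|^\alpha$ when $0<\alpha\le1$, to pass from the estimate on the integrands to an estimate on $\sigma_{k,n}(t_0)^\alpha-M_{t_0}^\alpha$, and then the mean value theorem for $x\mapsto x^{1/\alpha}$ (using $M_{t_0}>0$) to get (\ref{eq.25}). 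The exponent $\alpha(H(t_0)-\gamma)\wedge(H(t_0)-\gamma)$ is $(H(t_0)-\gamma)$ when $\alpha\ge1$ and $\alpha(H(t_0)-\gamma)$ when $\alpha<1$, matching precisely the two regimes of the inequality for $|x^\alpha-y^\alpha|$.

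The main obstacle I anticipate is the uniform control of the integrand near the singularities $s'=p$ and in the tail simultaneously: one must choose the truncation radius $R$ and split off the logarithmic factors carefully so that the bound is genuinely uniform in $k\in\nu_{\gamma,n}(t_0)$ and so that no spurious $\log n$ survives in the final rate (the statement (\ref{eq.25}) has no logarithm). Handling the case $H(t_0)-1/\alpha<0$, where each individual term $|p-s'|^{H(t_0)-1/\alpha}$ blows up at $s'=p$, requires keeping the terms grouped so that the cancellation among the $a_p$'s is not destroyed; in practice one estimates the difference $f(k,n,s)-g(k,n,s)$ (in the notation of (\ref{eq.21})--(\ref{eq.22})) as a single object rather than term by term near each singularity. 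Everything else is a routine, if slightly tedious, splitting-and-bounding argument.
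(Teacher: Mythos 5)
Your plan is essentially the paper's own argument: reduce $|\sigma_{k,n}^\alpha(t_0)-M_{t_0}^\alpha|$ to $\int_{\mathbb{R}}|f(k,n,s)-g(k,n,s)|^\alpha\,ds$ via the power-difference inequality (the paper invokes Lemma 4.7.2 of \cite{Taqqu1994}, whose exponent $1\wedge 1/\alpha$ produces exactly the two regimes $n^{\alpha(H(t_0)-\gamma)}$ for $\alpha\le 1$ and $n^{H(t_0)-\gamma}$ for $\alpha\ge 1$ that you identify), control the integrand by the mean value theorem in the $H$-variable together with the localization $|\tfrac{k+p}{n}-t_0|\le n^{-\gamma}$ and an integrable dominating function with logarithmic singularities and polynomial tail decay, and finish with the mean value theorem for $x\mapsto x^{1/\alpha}$ using $0<M_1<\sigma_{k,n}(t_0)<M_2$. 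The one divergence worth flagging is that you rescale $s\mapsto(k+s')/n$ \emph{before} comparing, which introduces the factors $n^{H(t_0)-H(\frac{k+p}{n})}$ and hence the $O(n^{-\gamma}\log n)$ term you yourself worry about (and also tempts you to discard the $-|s|^{H(\frac{k+p}{n})-1/\alpha}$ terms from $\sigma_{k,n}$, which is not legitimate there since those exponents vary with $p$); the paper sidesteps both issues by comparing $f$ and $g$ in the original variable $s$, keeping the two kernel terms grouped inside the mean value theorem, and using the scaling change of variables only for the reference identity $\int_{\mathbb{R}}|g(k,n,s)|^\alpha ds=M_{t_0}^\alpha n^{-\alpha H(t_0)}$, so that no logarithm survives in the rate.
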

\begin{remark}\label{remark.1.3}
From Lemma \ref{lem.1.18} and since $H(t_0)<\gamma$, it follows that 
$$
\lim\limits_{n\to +\infty} \sigma_{k,n}(t_0)=M_{t_0}.
$$
Therefore, there exist $n_0\in \mathbb{N}$ and constants $0<M_1<M_{t_0}<M_2$ such that for all $n\geq n_0$, then $M_1<\sigma_{k,n}(t_0)<M_2$.  
\end{remark}
\begin{proof}
We have
\begin{align*}
\sigma_{k,n}(t_0)=\left(\int\limits_{\mathbb{R}}\left| \sum\limits_{p=0}^K a_p n^{H(t_0)}(|\frac{k+p}{n}-s|^{H(\frac{k+p}{n})-1/\alpha}-|s|^{H(\frac{k+p}{n})-1/\alpha} )\right|^\alpha ds\right)^{1/\alpha}.
\end{align*}
By changing variable $s'=\frac{s+k}{n}$ and using the fact that $\sum\limits_{p=0}^K a_p=0$, one can write
\begin{align*}
M_{t_0}^\alpha&=\int\limits_{\mathbb{R}}\left| \sum\limits_{p=0}^K a_p n^{H(t_0)}\left(|\frac{k+p}{n}-s'|^{H(t_0)-1/\alpha}-|s'|^{H(t_0)-1/\alpha} \right)\right|^\alpha ds' \\
&=\int\limits_{\mathbb{R}}\left| \sum\limits_{p=0}^K a_p n^{H(t_0)}(|\frac{k+p}{n}-s|^{H(t_0)-1/\alpha}-|s|^{H(t_0)-1/\alpha} )\right|^\alpha ds. 
\end{align*}
Then following Lemma 4.7.2 in \cite{Taqqu1994}, one has
\begin{align}
|\sigma_{k,n}^\alpha (t_0)-M_{t_0}^\alpha|&=
\left| \int\limits_{\mathbb{R}}\left(|f(k,n,s)|^\alpha -|g_{k,n}^1(s)|^\alpha\right) ds  \right| \notag\\
&\leq  \int\limits_{\mathbb{R}}\left||f(k,n,s)|^\alpha -|g(k,n,s)|^\alpha\right|ds \notag\\
&\leq C_*\left( \int\limits_{\mathbb{R}}\left|
f(k,n,s)-g(k,n,s)\right|^\alpha ds \right)^{1\wedge 1/\alpha},\label{eq.26}
\end{align}
where 
\begin{align*}
C_*&= \begin{cases}
    1 & \text{ if } 0<\alpha\leq 1,\\
    2^{1/\alpha}\alpha (||f_1(k,n,s)||_\alpha^{\alpha-1}+||g_1(k,n,s)||_\alpha^{\alpha-1})& \text { if } 1<\alpha\leq 2,\\
  \end{cases}\\
f_1(k,n,s)&= \sum\limits_{p=0}^K a_p n^{H(t_0)}\left(|\frac{k+p}{n}-s|^{H(\frac{k+p}{n})-1/\alpha}-|s|^{H(\frac{k+p}{n})-1/\alpha} \right)   =n^{H(t_0)}f(k,n,s),\\
g_1(k,n,s)&= \sum\limits_{p=0}^K a_p n^{H(t_0)}\left(|\frac{k+p}{n}-s|^{H(t_0)-1/\alpha}-|s|^{H(t_0)-1/\alpha} \right)=n^{H(t_0)}g(k,n,s),
\end{align*}
$f(k,n,s), g(k,n,s)$ are defined by (\ref{eq.21}), (\ref{eq.22}), respectively.
We now will prove that $C_*$ can be bounded by a constant.\\
Using the change of variable $s=s_1/n$, then $s_2=s_1-k$, one gets
\begin{align}
\int_\mathbb{R}|g(k,n,s)|^\alpha ds&=n^{-\alpha H(t_0)}\int_\mathbb{R}|\sum\limits_{p=0}^Ka_p|k+p-s_1|^{H(t_0)-1/\alpha}|^\alpha ds_1 \notag\\
&=n^{-\alpha H(t_0)}\int_\mathbb{R}|\sum\limits_{p=0}^Ka_p|p-s_2|^{H(t_0)-1/\alpha}|^\alpha ds_2\notag \\
&=M_{t_0}^\alpha n^{-\alpha H(t_0)}. \label{eq.27}
\end{align}
It follows that 
\begin{align}\label{eq.28}
||g_1(k,n,s)||_\alpha=n^{H(t_0)}\left(\int\limits_{\mathbb{R}}|g(k,n,s)|^\alpha ds\right)^{1/\alpha}=M_{t_0}.
\end{align}
For $||f_1(k,n,s)||_\alpha $, applying Lemma 2.7.13 in \cite{Taqqu1994}, one obtains
\begin{align}
||f_1(k,n,s)||_\alpha^\alpha&=n^{\alpha H(t_0)}\int\limits_{\mathbb{R}}|f(k,n,s)|^\alpha ds \notag\\
&\leq 2^{0\wedge(\alpha-1)}n^{\alpha H(t_0)}\int\limits_{\mathbb{R}}\left(|f(k,n,s)-g(k,n,s)|^\alpha+|g(k,n,s)|^\alpha\right)ds \label{eq.29}
\end{align}
where $x\wedge y =\min \{x,y\}$.\\
Now we consider 
$\int_\mathbb{R}|f(k,n,s)-g(k,n,s)|^\alpha ds $. Applying again $K$ times Lemma 2.7.13 in \cite{Taqqu1994}, 
  for $0<\alpha\leq 2$, one gets
\begin{align}
&|f(k,n,s)-g(k,n,s)|^\alpha \notag\\
&\leq \sum\limits_{p=0}^K\left|a_p \left(|\frac{k+p}{n}-s|^{H(\frac{k+p}{n})-1/\alpha}-|s|^{H(\frac{k+p}{n})-1/\alpha}-|\frac{k+p}{n}-s|^{H(t_0)-1/\alpha}-|s|^{H(t_0)-1/\alpha}\right)\right|^\alpha \notag \\
&\times 2^{0\wedge (K(\alpha-1))} \notag\\
&=2^{0\wedge (K(\alpha-1))} \sum\limits_{p=0}^K|a_p|^\alpha|h(\frac{k+p}{n},\frac{k+p}{n},s)-h(\frac{k+p}{n},t_0,s)|^\alpha \label{eq.30}
\end{align}
where 
\begin{align*}
h(t,v,s)=|t-s|^{H(v)-1/\alpha}-|s|^{H(v)-1/\alpha}.
\end{align*}
Similarly to the proof of Theorem 7.4 in \cite{Vehel2009}, let $h_-,h_+$ be fixed such that $0<h_-<H(t)<h_+<1$ for all $t\in U$.
With $s\neq 0, s\neq \frac{k+p}{n}$, applying the mean value theorem, since $k\in \nu_{\gamma,n}(t_0)$, one obtains
\begin{align*}
&|h(\frac{k+p}{n},\frac{k+p}{n},s)-h(\frac{k+p}{n},t_0,s)|\\
&=|H(\frac{k+p}{n})-H(t_0)|\left||\frac{k+p}{n}-s|^{H(.)-1/\alpha}\ln|\frac{k+p}{n}-s|-|s|^{H(.)-1/\alpha}\ln|s|\right|\\
&\leq \frac{\sup\limits_{t\in U}H'(t)}{n^\gamma} \left||\frac{k+p}{n}-s|^{H(.)-1/\alpha}\ln|\frac{k+p}{n}-s|-|s|^{H(.)-1/\alpha}\ln|s|\right|\\
&\leq\frac{K_1(\frac{k+p}{n},s)}{n^\gamma},
\end{align*}
where $H(.)$ is on a line segment connecting $H(\frac{k+p}{n})$ and $H(t_0)$ and 
\begin{align*}
K_1(t,s)=\begin{cases}
 c_1 \max \{1, |t-s|^{h_--1/\alpha}+|s|^{h_--1/\alpha}\}& \text{ if } |s|\leq 1+ 2\max\limits_{t\in U}|t|,\\
c_2|s|^{h_+-1/\alpha-1}& \text{ if }  |s|> 1+ 2\max\limits_{t\in U}|t|\\
 \end{cases}
\end{align*}
where $c_1$ and $c_2$ are appropriate constants, see \cite{Vehel2009}. Then
$\int\limits_\mathbb{R}K_1(t,s)^\alpha ds<+\infty$ and uniformly bounded for $t\in U$. Combining with (\ref{eq.30}), it follows that there exists a constant $C>0$ such that
\begin{align}\label{eq.31}
\int_\mathbb{R}|f(k,n,s)-g(k,n,s)|^\alpha ds &\leq \frac{C}{n^{\alpha \gamma}}.
\end{align}
From (\ref{eq.27}), (\ref{eq.29}), (\ref{eq.31}) and since $H(t_0)<\gamma$, there exists a running constant $C>0$ such that 
\begin{align}\label{eq.32}
||f_1(k,n,s)||_\alpha&\leq C.
\end{align}
From (\ref{eq.28}) and (\ref{eq.32}), it follows that $C_*$ can be bounded by a constant.\\
We now consider $\int\limits_{\mathbb{R}}\left|
f_1(k,n,s)-g_1(k,n,s)\right|^\alpha ds
$. From (\ref{eq.31}), one gets
\begin{align*}
\int\limits_{\mathbb{R}}\left|
f_1(k,n,s)-g_1(k,n,s)\right|^\alpha ds&=n^{\alpha H(t_0)}\int\limits_{\mathbb{R}}\left|
f(k,n,s)-g(k,n,s)\right|^\alpha ds\\
&\leq Cn^{\alpha(H(t_0)-\gamma)}.
\end{align*}
It follows that
\begin{align}\label{eq.33}
|\sigma_{k,n}^\alpha (t_0)-M_{t_0}^\alpha|&=O \left(n^{\alpha(H(t_0)-\gamma)\wedge(H(t_0)-\gamma) }\right).
\end{align}
Thus, there exist $n_0\in\mathbb{N}$ and constants $0<M_1<M_{t_0}<M_2$ such that for all $n\geq n_0$, then  $0<M_1<\sigma_{k,n}(t_0)<M_2$.\\
For $0< \alpha\leq 2, \alpha\neq 1$, the mean value theorem gives
\begin{align*}
|\sigma_{k,n}^\alpha(t_0)-M_{t_0}^\alpha|&=\alpha|\sigma_{k,n}(t_0)-M_{t_0}|x_0^{\alpha-1}
\end{align*}
where $x_0\in(M_1,M_2)$. It follows that
\begin{align*}
|\sigma_{k,n}(t_0)-M_{t_0}|&\leq C n^{\alpha(H(t_0)-\gamma)\wedge(H(t_0)-\gamma) },
\end{align*}
 which means
$|\sigma_{k,n}(t_0)-M_{t_0}|=O\left(n^{\alpha(H(t_0)-\gamma)\wedge(H(t_0)-\gamma) }\right)$.
\end{proof} 
\begin{lem}\label{lem.1.19}
Let $X$ be a multifractional stable process defined by (\ref{eq.1}). For $0<\alpha\leq 2$ and $k\in\nu_{\gamma,n}(t_0)$, then
\begin{align*}
\left|\mathbb{E}\left|\frac{\triangle_{k,n}X}{n^{-H(t_0)}}\right|^\beta- M_{t_0,\beta}\right|&=O \left( n^{\alpha(H(t_0)-\gamma)\wedge(H(t_0)-\gamma) }\right)
\end{align*}
and
\begin{align*}
\left|\mathbb{E}W_{t_0,n}(\beta)- M_{t_0,\beta}\right|&=O \left( n^{\alpha(H(t_0)-\gamma)\wedge(H(t_0)-\gamma) }\right).
\end{align*}
\end{lem}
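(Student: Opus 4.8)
The plan is to reduce the statement to a one-dimensional computation about symmetric $\alpha$-stable random variables, using the $\alpha$-self-similarity structure of the integral representation (\ref{eq.1}) together with the estimate on the scale parameters already obtained in Lemma \ref{lem.1.18}. First I would note that, for fixed $k$ and $n$, the random variable $\triangle_{k,n}X = \sum_{p=0}^K a_p X((k+p)/n)$ is, by (\ref{eq.1}) and the linearity of the stochastic integral, a symmetric $\alpha$-stable random variable; hence $\triangle_{k,n}X / n^{-H(t_0)}$ is a symmetric $\alpha$-stable variable with scale parameter exactly $\sigma_{k,n}(t_0)$ as defined in (\ref{eq.23}). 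Consequently $\triangle_{k,n}X / n^{-H(t_0)}$ has the same law as $\sigma_{k,n}(t_0)\,Z$, where $Z$ is a standard S$\alpha$S variable (scale $1$). Therefore
\begin{align*}
\mathbb{E}\left|\frac{\triangle_{k,n}X}{n^{-H(t_0)}}\right|^\beta = \sigma_{k,n}(t_0)^\beta\,\mathbb{E}|Z|^\beta .
\end{align*}
The constant $\mathbb{E}|Z|^\beta$ is finite precisely because $-1/2<\beta<0$ (negative moments of stable laws of any order in $(-1,0)$ exist, the density being bounded and positive at the origin), and a direct computation with the Fourier representation of the S$\alpha$S density gives $\mathbb{E}|Z|^\beta = \frac{C_\beta}{\sqrt{2\pi}}\int_\mathbb{R} \frac{e^{-|y|^\alpha}}{|y|^{1+\beta}}\,dy$ with $C_\beta$ as in (\ref{eq.35}); this identity is exactly the one recorded in \cite{DanIst2017} and it is what makes $M_{t_0,\beta}$ in (\ref{eq.34}) equal to $M_{t_0}^\beta\,\mathbb{E}|Z|^\beta$.

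Given this, the first assertion follows from the estimate $|\sigma_{k,n}(t_0)-M_{t_0}| = O(n^{\alpha(H(t_0)-\gamma)\wedge(H(t_0)-\gamma)})$ of Lemma \ref{lem.1.18}: write
\begin{align*}
\left|\mathbb{E}\left|\frac{\triangle_{k,n}X}{n^{-H(t_0)}}\right|^\beta - M_{t_0,\beta}\right| = \mathbb{E}|Z|^\beta\,\bigl|\sigma_{k,n}(t_0)^\beta - M_{t_0}^\beta\bigr|,
\end{align*}
and apply the mean value theorem to $x\mapsto x^\beta$ on the interval $(M_1,M_2)$ supplied by Remark \ref{remark.1.3}, on which the derivative is bounded; the $O(\cdot)$ bound is uniform in $k\in\nu_{\gamma,n}(t_0)$ since the constants $M_1,M_2$ and the constant in Lemma \ref{lem.1.18} are. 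The second assertion is then immediate: by (\ref{eq.8}) and (\ref{eq.9}), $\mathbb{E}W_{t_0,n}(\beta) = \frac{1}{\upsilon_{\gamma,n}(t_0)}\sum_{k\in\nu_{\gamma,n}(t_0)} \mathbb{E}\bigl|\triangle_{k,n}X / n^{-H(t_0)}\bigr|^\beta$, a convex average of quantities each within $O(n^{\alpha(H(t_0)-\gamma)\wedge(H(t_0)-\gamma)})$ of $M_{t_0,\beta}$, so the average is too.

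The only genuinely delicate point is the representation step: one must justify that $\triangle_{k,n}X$ really is S$\alpha$S with scale $\sigma_{k,n}(t_0) n^{-H(t_0)}$, i.e. that the integrand $\sum_{p=0}^K a_p(|{(k+p)/n}-s|^{H((k+p)/n)-1/\alpha} - |s|^{H((k+p)/n)-1/\alpha})$ lies in $L^\alpha(\mathbb{R})$, so that the stochastic integral and its scale parameter are well defined. This is where the vanishing-moment condition (\ref{eq.2}) on $a$ is used (it kills the non-integrable tail coming from the $|s|^{H(\cdot)-1/\alpha}$ term and produces extra decay at infinity), and the $L^\alpha$-finiteness near the singularities at $s=(k+p)/n$ holds because $H(\cdot)-1/\alpha>-1/\alpha$. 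These integrability facts are precisely the ones already exploited in the proof of Lemma \ref{lem.1.18} (via the bounds (\ref{eq.31}), (\ref{eq.32}) and the finiteness of $\int_\mathbb{R}K_1(t,s)^\alpha\,ds$), so no new work is needed; once they are in hand, the lemma reduces to the elementary scaling identity above, and there is no remaining obstacle.
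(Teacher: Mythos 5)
Your proposal is correct and follows essentially the same route as the paper: the paper also represents $\triangle_{k,n}X/n^{-H(t_0)}$ as an S$\alpha$S variable with scale $\sigma_{k,n}(t_0)$, extracts the factor $\sigma_{k,n}(t_0)^{\beta}$ from the negative moment via the Fourier formula of Theorem 4.1 in \cite{DanIst2017} (your scaling identity $\mathbb{E}|\sigma Z|^{\beta}=\sigma^{\beta}\mathbb{E}|Z|^{\beta}$ is exactly its change of variables $y_1=y\sigma_{k,n}$), applies the mean value theorem on $(M_1,M_2)$ together with Lemma \ref{lem.1.18}, and concludes the second assertion by averaging over $k\in\nu_{\gamma,n}(t_0)$. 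No substantive difference.
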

\begin{proof}
Since $\frac{\triangle_{k,n}X}{n^{-H(t_0)}}$ is a $S\alpha S$- stable random variable and $\sigma_{k,n}(t_0)=||\frac{\triangle_{k,n}X}{n^{-H(t_0)}}||_\alpha$, one gets
\begin{align*}
\mathbb{E}e^{iy\frac{\triangle_{k,n}X}{n^{-H(t_0)}}}&= e^{-|y|^{\alpha} \sigma_{k,n}(t_0)^\alpha}.
\end{align*}
Following Theorem 4.1 in \cite{DanIst2017} and using the change of variable $y_1=y\sigma_{k,n}$, one can write
\begin{align*}
\mathbb{E}\left|\frac{\triangle_{k,n}X}{n^{-H(t_0)}}\right|^\beta&=\frac{ C_\beta}{\sqrt{2\pi}}\int\limits_{\mathbb{R}}\frac{\mathbb{E}e^{iy\frac{\triangle_{k,n}X}{n^{-H(t_0)}}} }{|y|^{1+\beta}} dy=\frac{ C_\beta}{\sqrt{2\pi}}\int\limits_{\mathbb{R}}\frac{e^{-|y|^{\alpha} \sigma_{k,n}(t_0)^\alpha} }{|y|^{1+\beta}} dy\\
&= \frac{ \sigma_{k,n}^\beta(t_0) C_\beta}{\sqrt{2\pi}}\int\limits_{\mathbb{R}}\frac{e^{-|y_1|^{\alpha}} }{|y_1|^{1+\beta}} dy_1.
\end{align*}
It follows that
\begin{align*}
\left|\mathbb{E}\left|\frac{\triangle_{k,n}X}{n^{-H(t_0)}}\right|^\beta- M_{t_0,\beta}\right|&=  |\sigma_{k,n}^\beta(t_0)-M_{t_0}^\beta|\frac{C_\beta}{\sqrt{2\pi}}\int\limits_{\mathbb{R}}\frac{e^{-|y|^{\alpha}} }{|y|^{1+\beta}} dy \\
&=C|\sigma_{k,n}^\beta(t_0)-M^\beta|. 
\end{align*}
Applying the mean value theorem and Remark \ref{remark.1.3}, we get
\begin{align*}
|\sigma_{k,n}^\beta(t_0)-M_{t_0}^\beta|&=|\beta||\sigma_{k,n}(t_0)-M_{t_0}| \theta^{\beta-1}
\end{align*}
where $\theta \in (M_1, M_2)$.
 Combining with Lemma \ref{lem.1.18}, for $n\geq n_0$, it follows that
 \begin{align*}
 \left|\mathbb{E}\left|\frac{\triangle_{k,n}X}{n^{-H(t_0)}}\right|^\beta- M_{t_0,\beta }\right|& \leq C n^{\alpha(H(t_0)-\gamma)\wedge(H(t_0)-\gamma) }.
 \end{align*}
 Then 
$\left|\mathbb{E}\left|\frac{\triangle_{k,n}X}{n^{-H(t_0)}}\right|^\beta- M_{t_0,\beta}\right|=O \left( n^{\alpha(H(t_0)-\gamma)\wedge(H(t_0)-\gamma) }\right).$
One also gets
\begin{align*}
|\mathbb{E} W_{t_0,n}(\beta)-M_{t_0,\beta}|&= \left|\frac{1}{\upsilon_{\gamma,n}(t_0)}\sum\limits_{k\in\nu_{\gamma,n}(t_0)}\mathbb{E}\left(\left|\frac{\triangle_{k,n}X}{n^{-H(t_0)}}\right|^\beta-M_{t_0,\beta}\right)\right|\\
&\leq \frac{1}{\upsilon_{\gamma,n}(t_0)}\sum\limits_{k\in\nu_{\gamma,n}(t_0)}\left|\mathbb{E}\left|\frac{\triangle_{k,n}X}{n^{-H(t_0)}}\right|^\beta- M_{t_0,\beta}\right|.
\end{align*}
Since $\upsilon_{\gamma,n}=\# \nu_{\gamma,n}$, one obtains 
\begin{align*}
|\mathbb{E} W_{t_0,n}(\beta)-M_{t_0,\beta}|&=O \left( n^{\alpha(H(t_0)-\gamma)\wedge(H(t_0)-\gamma) }\right).
\end{align*}
\end{proof}
\begin{lem}\label{lem.1.20}
Let $X$ be a linear multifractional stable process defined by (\ref{eq.1}). Then there exists a constant $C>0$ such that for $k,k'\in\nu_{\gamma,n}(t_0)$, we have 
\begin{align*}
\left|cov\left(\left|\frac{\triangle_{k,n}X}{n^{-H(t_0)}}\right|^{\beta},\left|\frac{\triangle_{k',n}X}{n^{-H(t_0)}}\right|^{\beta}\right)\right|&\leq C.
\end{align*}
\end{lem}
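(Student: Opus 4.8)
The plan is to reduce the claim to a uniform bound on a single absolute moment via the Cauchy--Schwarz inequality, and then to obtain that bound from the negative--moment formula already used for Lemma~\ref{lem.1.19}, exploiting that $2\beta$ still lies in $(-1,0)$. Write $Z_{k,n}=\triangle_{k,n}X/n^{-H(t_0)}$. Cauchy--Schwarz for covariances gives
\[
\left|cov\!\left(|Z_{k,n}|^{\beta},|Z_{k',n}|^{\beta}\right)\right|
\le \sqrt{\mathrm{Var}\!\left(|Z_{k,n}|^{\beta}\right)}\,\sqrt{\mathrm{Var}\!\left(|Z_{k',n}|^{\beta}\right)}
\le \sqrt{\mathbb{E}|Z_{k,n}|^{2\beta}\;\mathbb{E}|Z_{k',n}|^{2\beta}},
\]
so it suffices to show that $\mathbb{E}|Z_{k,n}|^{2\beta}$ is bounded by a constant independent of $n$ and of $k\in\nu_{\gamma,n}(t_0)$.

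Since $\beta\in(-1/2,0)$ we have $2\beta\in(-1,0)$, so, $Z_{k,n}$ being a $S\alpha S$ variable of scale $\sigma_{k,n}(t_0)$, Theorem~4.1 in \cite{DanIst2017} applies with exponent $2\beta$ and, exactly as in the proof of Lemma~\ref{lem.1.19} (with the change of variable $y_1=y\,\sigma_{k,n}(t_0)$), gives
\[
\mathbb{E}|Z_{k,n}|^{2\beta}
=\frac{\sigma_{k,n}^{2\beta}(t_0)\,C_{2\beta}}{\sqrt{2\pi}}\int_{\mathbb{R}}\frac{e^{-|y|^{\alpha}}}{|y|^{1+2\beta}}\,dy ,
\]
where $C_{2\beta}$ is given by (\ref{eq.35}) with $\beta$ replaced by $2\beta$. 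The integral is finite because $1+2\beta<1$ tames the singularity at $0$ while $e^{-|y|^{\alpha}}$ tames the tails. By Lemma~\ref{lem.1.18} and Remark~\ref{remark.1.3} there are $n_0\in\mathbb{N}$ and constants $0<M_1<M_{t_0}<M_2$ with $M_1<\sigma_{k,n}(t_0)<M_2$ for all $n\ge n_0$ and all $k\in\nu_{\gamma,n}(t_0)$; as $2\beta<0$ this yields $\sigma_{k,n}^{2\beta}(t_0)<M_1^{2\beta}$, hence $\mathbb{E}|Z_{k,n}|^{2\beta}\le M_1^{2\beta}\,\frac{C_{2\beta}}{\sqrt{2\pi}}\int_{\mathbb{R}}\frac{e^{-|y|^{\alpha}}}{|y|^{1+2\beta}}\,dy=:C_0$ for every $n\ge n_0$. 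Plugging this into the Cauchy--Schwarz bound proves the lemma for $n\ge n_0$, and for the finitely many $n<n_0$ one simply enlarges the constant.

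I do not expect a genuine obstacle here; the one point to keep in mind is that the uniform (in $n$ and $k$) second--moment bound is legitimate precisely because $\beta>-1/2$, and that its uniformity comes entirely from the two--sided control of the scales $\sigma_{k,n}(t_0)$ established in Lemma~\ref{lem.1.18}. If one preferred to avoid Cauchy--Schwarz, an alternative would be to express the covariance directly through the bivariate analogue of the negative--moment formula, i.e.\ as a double integral of $\exp(-\int_{\mathbb{R}}|y_1 f_1+y_2 f_2|^{\alpha}\,ds)-\exp(-\int_{\mathbb{R}}|y_1 f_1|^{\alpha}\,ds-\int_{\mathbb{R}}|y_2 f_2|^{\alpha}\,ds)$ against $|y_1|^{-1-\beta}|y_2|^{-1-\beta}$, and to bound it crudely by twice the product of the one--dimensional integrals; but this does more work than the stated bound needs, and the sharper, $|k-k'|$--decaying estimate that the later variance computations require would in any case be proved in a separate lemma.
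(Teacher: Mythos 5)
Your proof is correct and follows essentially the same route as the paper: both reduce the covariance to a uniform bound on $\mathbb{E}\left|\triangle_{k,n}X/n^{-H(t_0)}\right|^{2\beta}$ via Cauchy--Schwarz (the paper uses $|cov(X,Y)|\le \mathbb{E}|XY|+\mathbb{E}|X|\,\mathbb{E}|Y|$ plus Cauchy--Schwarz rather than the variance form, a cosmetic difference), and both obtain that uniform bound from Theorem~4.1 of \cite{DanIst2017} applied with exponent $2\beta\in(-1,0)$ together with the two-sided control $M_1<\sigma_{k,n}(t_0)<M_2$ from Lemma~\ref{lem.1.18} and Remark~\ref{remark.1.3}. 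No gaps.
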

\begin{proof}
Since $-1/2<\beta<0$, from Theorem 4.1 in \cite{DanIst2017} and Lemma \ref{lem.1.18}, for $n\geq n_0$, we can write 
\begin{align*}
\mathbb{E}\left|\frac{\triangle_{k,n}X}{n^{-H(t_0)}}\right|^{2\beta}&= \frac{ C_{2\beta}}{\sqrt{2\pi}}\int\limits_{\mathbb{R}}\frac{\mathbb{E}e^{iy\frac{\triangle_{k,n}X}{n^{-H(t_0)}}} }{|y|^{1+2\beta}} dy=\frac{ C_{2\beta}}{\sqrt{2\pi}}\int\limits_{\mathbb{R}}\frac{e^{-|y|^{\alpha} \sigma_{k,n}^\alpha} }{|y|^{1+2\beta}} dy\\
&= \frac{ \sigma_{k,n}^{2\beta} C_{2\beta}}{\sqrt{2\pi}}\int\limits_{\mathbb{R}}\frac{e^{-|y|^{\alpha}} }{|y|^{1+2\beta}} dy\leq \frac{ M_1^{2\beta} C_{2\beta}}{\sqrt{2\pi}}\int\limits_{\mathbb{R}}\frac{e^{-|y|^{\alpha}} }{|y|^{1+2\beta}} dy 
\end{align*}
where $C_{2\beta}$ is defined by (\ref{eq.35}) and $M_1$ is defined as in Remark \ref{remark.1.3}.
Applying Cauchy-Schwarz's inequality, one gets
\begin{align*}
\mathbb{E}\left|\frac{\triangle_{k,n}X}{n^{-H(t_0)}}\right|^{\beta}\left|\frac{\triangle_{k',n}X}{n^{-H(t_0)}}\right|^{\beta}&\leq \left(\mathbb{E}\left|\frac{\triangle_{k,n}X}{n^{-H(t_0)}}\right|^{2\beta}\mathbb{E}\left|\frac{\triangle_{k',n}X}{n^{-H(t_0)}}\right|^{2\beta}\right)^{1/2}\leq \frac{ M_1^{2\beta} C_{2\beta}}{\sqrt{2\pi}}\int\limits_{\mathbb{R}}\frac{e^{-|y|^{\alpha}} }{|y|^{1+2\beta}} dy .
\end{align*}
Moreover 
\begin{align*}
\mathbb{E}\left|\frac{\triangle_{k,n}X}{n^{-H(t_0)}}\right|^{\beta}&\leq \left(\mathbb{E}\left|\frac{\triangle_{k,n}X}{n^{-H(t_0)}}\right|^{2\beta}\right)^{1/2}\leq \left(\frac{ M_1^{2\beta} C_{2\beta}}{\sqrt{2\pi}}\int\limits_{\mathbb{R}}\frac{e^{-|y|^{\alpha}} }{|y|^{1+\beta}} dy \right)^{1/2}.
\end{align*}
Then we deduce
\begin{align*}
\left|cov\left(\left|\frac{\triangle_{k,n}X}{n^{-H(t_0)}}\right|^{\beta},\left|\frac{\triangle_{k',n}X}{n^{-H(t_0)}}\right|^{\beta}\right)\right|&\leq \mathbb{E}\left|\frac{\triangle_{k,n}X}{n^{-H(t_0)}}\right|^{\beta}\left|\frac{\triangle_{k',n}X}{n^{-H(t_0)}}\right|^{\beta}+\mathbb{E}\left|\frac{\triangle_{k,n}X}{n^{-H(t_0)}}\right|^{\beta}\mathbb{E}\left|\frac{\triangle_{k',n}X}{n^{-H(t_0)}}\right|^{\beta} \\
&\leq \frac{ 2M_1^{2\beta} C_{2\beta}}{\sqrt{2\pi}}\int\limits_{\mathbb{R}}\frac{e^{-|y|^{\alpha}} }{|y|^{1+\beta}} dy=C. 
\end{align*}
\end{proof}
\subsection{Proof of Theorem \ref{thm.1}}\label{Proof of Theorem 1}
To prove Theorem \ref{thm.1}, we first present the following lemma.
\begin{lem}\label{lem.1.22}
Let $X$ be a multifractional Brownian motion defined by (\ref{eq.1}) with $\alpha=2$,  $M_\alpha(ds)$ be the standard Gaussian measure on $\mathbb{R}$ and $t_0\in U$. Then there exist $n_1,k_0\in\mathbb{N}$, a constant $C>0$ such that for all $n\geq n_1, k,k'\in\nu_\gamma(t_0)$, and $|k-k'|>k_0$, we have 
\begin{align}\label{eq.36}
cov(|\triangle_{k,n}X|^\beta,|\triangle_{k',n}X|^\beta)\leq C n^{-2\beta H(t_0)}\left(n^{4(H(t_0)-\gamma)}+n^{2(H(t_0)-\gamma)}+|k-k'|^{2H(t_0)-2(L+1)}\right).
\end{align}
\end{lem}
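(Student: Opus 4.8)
I would bound the covariance of $|\triangle_{k,n}X|^\beta$ and $|\triangle_{k',n}X|^\beta$ by first reducing to a Gaussian pair and then using the standard trick that expresses $|x|^\beta$ via its Fourier representation $|x|^\beta=\frac{C_\beta}{\sqrt{2\pi}}\int_\mathbb{R}|y|^{-1-\beta}e^{ixy}\,dy$ (valid for $-1/2<\beta<0$, as in Theorem 4.1 of \cite{DanIst2017}). For a centered Gaussian vector $(G,G')$ with variances $\sigma^2,\sigma'^2$ and covariance $r$, one then gets
\begin{align*}
\mathbb{E}\bigl(|G|^\beta|G'|^\beta\bigr)=\frac{C_\beta^2}{2\pi}\int_{\mathbb{R}^2}\frac{e^{-\frac12(\sigma^2 y^2+2ry y'+\sigma'^2 y'^2)}}{|y|^{1+\beta}|y'|^{1+\beta}}\,dy\,dy',
\end{align*}
so that $\mathrm{cov}(|G|^\beta,|G'|^\beta)=\frac{C_\beta^2}{2\pi}\int_{\mathbb{R}^2}\frac{e^{-\frac12(\sigma^2y^2+\sigma'^2y'^2)}(e^{-ryy'}-1)}{|y|^{1+\beta}|y'|^{1+\beta}}\,dy\,dy'$, and when $|r|$ is small relative to $\sigma\sigma'$ the elementary bound $|e^{-ryy'}-1|\le |r||y||y'|e^{|r||y||y'|}$ makes this integral convergent and $O(|r|)$, uniformly, because Remark \ref{remark.1.3} guarantees $\sigma_{k,n},\sigma_{k',n}\in(M_1,M_2)$ for $n\ge n_0$. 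So the whole problem reduces to estimating the normalized covariance
\[
r_{k,k',n}:=\mathrm{cov}\!\left(\frac{\triangle_{k,n}X}{n^{-H(t_0)}},\frac{\triangle_{k',n}X}{n^{-H(t_0)}}\right)=\int_\mathbb{R} f_1(k,n,s)f_1(k',n,s)\,ds
\]
and showing $|r_{k,k',n}|\le C\bigl(n^{2(H(t_0)-\gamma)}+|k-k'|^{2H(t_0)-2(L+1)}\bigr)$; squaring that (because $\mathrm{cov}(|G|^\beta,|G'|^\beta)=O(|r|)$, actually $O(r^2)$ near $r=0$ after one more Taylor step since the linear term integrates to a product of odd integrals that could be nonzero — let me keep the cleaner $O(|r|)$ bound and instead aim to show $|r_{k,k',n}|$ itself is $O(n^{4(H(t_0)-\gamma)}+n^{2(H(t_0)-\gamma)}+|k-k'|^{2H(t_0)-2(L+1)})$, matching \eqref{eq.36} after multiplying by $n^{-2\beta H(t_0)}$).

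**Key steps in order.** First, replace $f_1(k,n,s)$ by $g_1(k,n,s)$ (the frozen-exponent version with $H(t_0)$ in place of $H(\tfrac{k+p}{n})$): by \eqref{eq.31}–\eqref{eq.32} the error $\|f_1(k,n,\cdot)-g_1(k,n,\cdot)\|_2=O(n^{H(t_0)-\gamma})$, so by Cauchy–Schwarz the replacement costs $O(n^{H(t_0)-\gamma})\cdot O(1)=O(n^{H(t_0)-\gamma})$ in $r_{k,k',n}$, and two such replacements plus their product give the $n^{2(H(t_0)-\gamma)}$ (and the cross term $n^{H(t_0)-\gamma}\cdot n^{2(H(t_0)-\gamma)/\,\cdot}$, ultimately $n^{4(H(t_0)-\gamma)}$ after a further bootstrapping of $\|f_1-g_1\|$ against itself) terms in \eqref{eq.36}. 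Second, for the frozen pair, $\int_\mathbb{R} g_1(k,n,s)g_1(k',n,s)\,ds$ is, after the change of variables $s\mapsto (s+k)/n$, exactly $n^{0}\cdot(\text{the stationary-increment covariance of a fBm-type field with filter }a\text{ at lag }k-k')$; since $a$ has $L$ vanishing moments and $\sum k^{L+1}a_k\ne0$, the classical computation (as in \cite{Coeurjolly2005} or Lemma 4.7.2-style manipulations in \cite{Taqqu1994}) gives the decay $O(|k-k'|^{2H(t_0)-2(L+1)})$ for $|k-k'|\ge k_0$ with $k_0$ large enough that one stays away from the singularity of $|x|^{2H(t_0)}$ at $0$. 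Third, assemble: for $|k-k'|>k_0$ and $n\ge n_1:=n_0$, collect the three contributions to obtain $|r_{k,k',n}|\le C(n^{4(H(t_0)-\gamma)}+n^{2(H(t_0)-\gamma)}+|k-k'|^{2H(t_0)-2(L+1)})$, then multiply by $n^{-2\beta H(t_0)}$ and invoke the Gaussian Fourier bound above to pass from $r_{k,k',n}$ to $\mathrm{cov}(|\triangle_{k,n}X|^\beta,|\triangle_{k',n}X|^\beta)$.

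**Main obstacle.** The delicate point is the long-range decay estimate $O(|k-k'|^{2H(t_0)-2(L+1)})$: one must show that the frozen-coefficient increment covariance $\int_\mathbb{R} g_1(k,n,s)g_1(k',n,s)\,ds$ behaves, after rescaling, like the $(k-k')$-lag autocovariance of $\triangle$-increments of a fractional Brownian field, and that the $L$ vanishing moments of $a$ produce the $2(L+1)$-th order cancellation. This is a finite-difference/Taylor argument on $|x|^{2H(t_0)}$ near $x=k-k'$ (needing $|k-k'|$ bounded away from $0$, hence the role of $k_0$) which is standard but the bookkeeping of the filter's moment conditions is where care is required; everything else (the Gaussian Fourier representation, Cauchy–Schwarz replacements, and the uniform variance bounds from Remark \ref{remark.1.3}) is routine.
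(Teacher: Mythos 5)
Your decomposition of $\int_\mathbb{R} f(k,n,s)f(k',n,s)\,ds$ into a frozen-exponent part plus $f-g$ corrections, the Cauchy--Schwarz treatment of the cross terms, and the filter-decay estimate $O(|k-k'|^{H(t_0)-(L+1)})$ for the frozen term are exactly the paper's steps. The gap is in how you pass from the Gaussian correlation to the covariance of the $\beta$-powers. The exponents in \eqref{eq.36} --- $n^{4(H(t_0)-\gamma)}$, $n^{2(H(t_0)-\gamma)}$, $|k-k'|^{2H(t_0)-2(L+1)}$ --- are the \emph{squares} of what your estimates give for the normalized correlation, namely $|\rho_{k,k'}|\le C\bigl(n^{2(H(t_0)-\gamma)}+n^{H(t_0)-\gamma}+|k-k'|^{H(t_0)-(L+1)}\bigr)$. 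So the lemma genuinely needs the quadratic inequality $|\mathrm{cov}(|G|^{\beta},|G'|^{\beta})|\le C\rho^{2}$ for jointly Gaussian $G,G'$ with correlation bounded away from $\pm1$ (Lemma A.1 in \cite{DanIst2017}), which is precisely the ``one more Taylor step'' you sketched and then discarded. Your reason for discarding it is mistaken: in the expansion of $e^{-ryy'}$ the linear term factors into integrals of $\mathrm{sgn}(y)\,|y|^{-\beta}e^{-\sigma^{2}y^{2}/2}$, which vanish by oddness, so the covariance really is $O(\rho^{2})$, not merely $O(|\rho|)$.

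Having retreated to the $O(|\rho|)$ bound, you try to compensate by asserting that $r_{k,k',n}=\int_\mathbb{R} f_1(k,n,s)f_1(k',n,s)\,ds$ is itself $O\bigl(n^{4(H(t_0)-\gamma)}+n^{2(H(t_0)-\gamma)}+|k-k'|^{2H(t_0)-2(L+1)}\bigr)$. That is not true, and the ``bootstrapping of $\|f_1-g_1\|$ against itself'' does not produce it: $\|f_1-g_1\|_2^{2}=O(n^{2(H(t_0)-\gamma)})$, so the $I_1$-type term is $O(n^{2(H(t_0)-\gamma)})$ and nothing of order $n^{4(H(t_0)-\gamma)}$ appears at the level of $r_{k,k',n}$; likewise the frozen term decays only like $|k-k'|^{H(t_0)-(L+1)}$. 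With the first-power bound you would prove a strictly weaker lemma, which degrades the rate $d_n$ in Theorem \ref{thm.1} (e.g.\ $n^{(H(t_0)-\gamma)/2}$ instead of $n^{H(t_0)-\gamma}$). The fix is to keep your correlation estimate as is, observe that it implies $|\rho_{k,k'}|\le\rho_*<1$ for $n\ge n_1$ and $|k-k'|>k_0$ (using $H(t_0)<\gamma$ and $H(t_0)<L+1$), apply the $O(\rho^{2})$ inequality, and square --- which is the paper's argument.
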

\begin{proof}
Let
\begin{align*}
I&=\int\limits_\mathbb{R}\left|f(k,n,s)f(k',n,s)\right|ds
\end{align*}
where $f(k,n,s)$ is defined by (\ref{eq.21}).
 One has
 \begin{align*}
 |f(k,n,s)|&\leq |f(k,n,s)-g(k,n,s)|+|g(k,n,s)|,\\
 |f(k',n,s)|&\leq |f(k',n,s)-g(k',n,s)|+|g(k',n,s)|
 \end{align*}
 where $g(k,n,s)$ is defined by (\ref{eq.22}).
 It follows that
\begin{align}\label{eq.37}
I&\leq \int\limits_\mathbb{R}\left(|f(k,n,s)-g(k,n,s)|+|g(k,n,s)|\right)\left(|f(k',n,s)-g(k',n,s)|+|g(k',n,s)|\right)ds \notag\\
&:=I_1+I_2+I_3+I_4,
\end{align}
where
\begin{align*}
I_1&=\int\limits_\mathbb{R}|f(k,n,s)-g(k,n,s)||f(k',n,s)-g(k',n,s)|ds\\
I_2&=\int\limits_\mathbb{R}|f(k,n,s)-g(k,n,s)||g(k',n,s)|ds\\
I_3&=\int\limits_\mathbb{R}|f(k',n,s)-g(k',n,s)||f(k,n,s)|ds\\
I_4&=\int\limits_\mathbb{R}|g(k,n,s)||g(k',n,s)|ds.
\end{align*}
We will find the bound for each term $I_1, I_2,I_3, I_4$.\\ 
For $I_4$, one has
\begin{align*}
I_4&=\int\limits_\mathbb{R}|g(k,n,s)||g(k',n,s)|ds\\
&=\int_\mathbb{R}\left| \sum\limits_{p=0}^Ka_p|\frac{k+p}{n}-s|^{H(t_0)-1/2} \right|\left| \sum\limits_{p'=0}^Ka_{p'}|\frac{k'+p'}{n}-s|^{H(t_0)-1/2} \right| ds.
\end{align*}
Making the change of variable $s=s_1/n$, one obtains
\begin{align*}
I_4&=n^{- 2H(t_0)}\int\limits_\mathbb{R}\left| \sum\limits_{p=0}^Ka_p|k+p-s_1|^{H(t_0)-1/2} \right|\left| \sum\limits_{p'=0}^Ka_{p'}|k'+p'-s_1|^{H(t_0)-1/2} \right| ds_1.
\end{align*}
Let $s_2=s_1-k$, then
\begin{align*}
I_4&=n^{-\alpha H(t_0)}\int_\mathbb{R}\left| \sum\limits_{p=0}^Ka_p|p-s_2|^{H(t_0)-1/2} \right|\left| \sum\limits_{p'=0}^Ka_{p'}|p'-(s_2+k-k')|^{H(t_0)-1/2} \right| ds_2.
\end{align*}
Similar to the proof of Lemma 3.6 in \cite{Jacques2012}, we can prove that there exists $K>0$ such that for $|k-k'|\geq K$, one has 
\begin{align}\label{eq.38}
I_4&\leq C n^{-2 H(t_0)}|k-k'|^{ H(t_0)-(L+1)}.
\end{align}
Now we work with $I_2$. Applying Cauchy-Schwarz's inequality, then
\begin{align*}
I_2&\leq \left(\int_\mathbb{R}|f(k,n,s)-g(k,n,s)|^2 ds\int_\mathbb{R}|g(k',n,s)|^2 ds|\right)^{1/2}.
\end{align*}
Moreover, from (\ref{eq.27}) and (\ref{eq.31}),
Moreover, we have
\begin{align*}
\int_\mathbb{R}|g(k',n,s)|^2 ds
&=M_{t_0}^2 n^{-2H(t_0)}\\
\int_\mathbb{R}|f(k,n,s)-g(k,n,s)|^2 ds&\leq Cn^{-2\gamma}
\end{align*}
where $M_{t_0}$ is defined by (\ref{eq.24}), $C$ is a constant.\\
Thus 
\begin{align}\label{eq.39}
I_2&\leq Cn^{-(H(t_0)+\gamma)}
\end{align}
Similarly, we also get 
\begin{align}\label{eq.40}
I_3\leq Cn^{-(H(t_0)+\gamma)}.
\end{align}
 For $I_1$, applying Cauchy-Schwarz's inequality, then
\begin{align}
I_1&\leq \left(\int_\mathbb{R}|f(k,n,s)-g(k,n,s)|^2 ds\int_\mathbb{R}|f(k',n,s)-g(k',n,s)|^2 ds|\right)^{1/2}\leq Cn^{-2\gamma}.\label{eq.41}
\end{align}
Combining with (\ref{eq.38}), (\ref{eq.39}), (\ref{eq.40}), one gets
\begin{align}\label{eq.42}
I\leq C\left(n^{-2\gamma}+n^{-(H(t_0)+\gamma)}+n^{-2 H(t_0)}|k-k'|^{H(t_0)-(L+1)}\right).
\end{align}
On the other hand, let
\begin{align}\label{eq.43}
\rho_{k,k'}&=cov\left(\frac{\triangle_{k,n}X}{||\triangle_{k,n}X||_2},\frac{\triangle_{k',n}X}{||\triangle_{k',n}X||_2}\right).
\end{align}
Since $X(t)$ is a centered Gaussian variable (see e.g. \cite{Cohen2013}), then 
\begin{align}\label{eq.44}
|\rho_{k,k'}|&=\frac{\left|\int\limits_{\mathbb{R}}f(k,n,s)f(k',n,s)ds\right|}{\left(\int\limits_{\mathbb{R}}|f(k,n,s)|^2ds\int\limits_{\mathbb{R}}|f(k',n,s)|^2ds\right)^{1/2}} \notag\\
&\leq\frac{\int\limits_{\mathbb{R}}|f(k,n,s)f(k',n,s)|ds}{\left(\int\limits_{\mathbb{R}}|f(k,n,s)|^2ds\int\limits_{\mathbb{R}}|f(k',n,s)|^2ds\right)^{1/2}}.
\end{align}
However, from Lemma \ref{lem.1.18} and Remark \ref{remark.1.3}, for $n\geq n_0, |k- k'|\geq K$ and $M_1<\sigma_{k,n}(u)<M_2$, using (\ref{eq.42}) and the fact that $\int\limits_\mathbb{R}|f(k,n,s)|^2ds=n^{-2 H(u)}\sigma_{k,n}(t_0)^2$, it follows that
\begin{align}\label{eq.45}
|\rho_{k,k'}|&\leq \frac{C\left(n^{-2\gamma}+n^{-(H(t_0)+\gamma)}+n^{-2H(t_0)}|k-k'|^{ H(t_0)-(L+1)}\right)}{n^{-2 H(t_0)}} \notag\\
&=C\left(n^{2(H(t_0)-\gamma)}+n^{H(t_0)-\gamma)}+|k-k'|^{H(t_0)-(L+1)}\right).
\end{align}
Since $H(t_0)<\gamma<1$, then there exist $n_1,k_0\in\mathbb{N}, n_1\geq n_0$ and $\rho_*$ such that for $n\geq n_1, |k-k'|> k_0$, we have 
\begin{align*}
|\rho_{k,k'}|\leq\rho_*<1.
\end{align*} 
Together with Lemma A.1 in \cite{DanIst2017}, there exist a constant $C$ such that 
\begin{align*}
\left|cov\left(\left|\frac{\triangle_{k,n}X}{||\triangle_{k,n}X||_2}\right|^\beta,\left|\frac{\triangle_{k',n}X}{||\triangle_{k',n}X||_2}\right|^\beta\right)\right|&\leq C \rho_{k,k'}^2\\
& \leq C\left(\int_\mathbb{R} \frac{|f(k,n,s)f(k',n,s)|}{||\triangle_{k,n}X||_\alpha||\triangle_{k',n}X||_2}ds\right)^2.
\end{align*}
Thus, one gets
\begin{align*}
\left|cov(|\triangle_{k,n}X|^\beta,|\triangle_{k',n}X|^\beta)\right|&\leq C\left(\int_\mathbb{R}|f(k,n,s)|^2 ds\right)^{\frac{\beta}{2}-1}\left(\int_\mathbb{R}|f(k',n,s)|^2 ds\right)^{\frac{\beta}{2}-1}\\
&\times \left(\int_\mathbb{R}\left|f(k,n,s)f(k',n,s)\right|ds\right)^2.
\end{align*}
Moreover, from Remark \ref{remark.1.3}, there exists  $M_1>0$ such that $ M_1<\sigma_{k,n}(u), M_1<\sigma_{k',n}(u)$. Then using the fact that $\frac{\beta}{2}-1<0$ and 
\begin{align*}
\int\limits_\mathbb{R}|f(k,n,s)|^2 ds&=n^{-2 H(t_0)}\sigma_{k,n}(t_0)^2> M_1^{2}n^{-2 H(t_0)},\\
\int\limits_\mathbb{R}|f(k',n,s)|^2 ds&=n^{-2 H(t_0)}\sigma_{k',n}(t_0)^2> M_1^{2}n^{-2 H(t_0)}, 
\end{align*} 
combining with (\ref{eq.42})
\begin{align*}
I^2&\leq  C\left(n^{-4\gamma}+n^{-2(H(t_0)+\gamma)}+n^{-4 H(t_0)}|k-k'|^{2H(t_0)-2(L+1)}\right),
\end{align*}
 it follows that 
\begin{align*}
&\left|cov(|\triangle_{k,n}X|^\beta,|\triangle_{k',n}X|^\beta)\right|\\
&\leq C\left(M_1^2 n^{-2 H(t_0)}\right)^{2\left(\frac{\beta}{2}-1\right)}\left(\int_\mathbb{R}\left|f(k,n,s)f(k',n,s)\right|ds\right)^2\\
&\leq C n^{-2\beta H(t_0)+4 H(t_0)}\left(n^{-4\gamma}+n^{-2(H(t_0)+\gamma)}+n^{-4 H(t_0)}|k-k'|^{2H(t_0)-2(L+1)}\right)\\
&= C n^{-2\beta H(t_0)}\left(n^{4(H(t_0)-\gamma)}+n^{2(H(t_0)-\gamma)}+|k-k'|^{2H(t_0)-2(L+1)}\right).
\end{align*} 
One gets the conclusion.
\end{proof}
Now we come back to the proof of Theorem \ref{thm.1}.
\begin{proof}[\emph{\bf{Proof of Theorem \ref{thm.1}.}}]
\begin{enumerate}
\item One gets
\begin{align*}
\mathbb{E}(W_{t_0,n}(\beta)-M_{t_0,\beta})^2&= \mathbb{E}W_{t_0,n}^2(\beta)-\left(\mathbb{E}W_{t_0,n}(\beta)\right)^2+\left(\mathbb{E}W_{t_0,n}(\beta)-M_{t_0,\beta} \right)^2.
\end{align*}
Applying Lemma \ref{lem.1.20} and Lemma \ref{lem.1.22}, for $n\geq n_1$, one has
\begin{align*}
\begin{split}
&\mathbb{E}W_{t_0,n}^2(\beta)-\left(\mathbb{E}W_{t_0,n}(\beta)\right)^2\\
&=\frac{1}{\upsilon_{\gamma,n}(t_0)^2}\sum\limits_{k,k'\in \nu_{\gamma,n}(t_0)}cov(\left|\frac{\triangle_{k,n}X}{n^{-H(t_0)}}\right|^\beta,\left|\frac{\triangle_{k',n}X}{n^{-H(t_0)}}\right|^\beta)\\
&\leq \frac{1}{\upsilon_{\gamma,n}(t_0)^2}\sum\limits_{k,k'\in \nu_{\gamma,n}(t_0), |k-k'|\leq k_0}\left|cov\left(\left|\frac{\triangle_{k,n}X}{n^{-H(t_0)}}\right|^\beta,\left|\frac{\triangle_{k',n}X}{n^{-H(t_0)}}\right|^\beta\right)\right|\\
&+\frac{n^{2\beta H(t_0)}}{\upsilon_{\gamma,n}(t_0)^2}\sum\limits_{k,k'\in \nu_{\gamma,n}(t_0),|k-k'|> k_0}\left|cov\left(\left|\triangle_{k,n}X|^\beta,|\triangle_{k',n}X\right|^\beta\right)\right|\\ 
&\leq \frac{1}{\upsilon_{\gamma,n}(t_0)^2}\sum\limits_{|p|\leq k_0, p\in\mathbb{Z}}(\upsilon_{\gamma,n}(t_0)-|p|)C+ \frac{n^{2\beta H(t_0)}}{\upsilon_{\gamma,n}(t_0)^2}\\
&\times \sum\limits_{ k_0< |p|\leq \upsilon_{\gamma,n}(t_0),p\in\mathbb{Z}} (\upsilon_{\gamma,n}(t_0)-|p|) C n^{-2\beta H(t_0)}\left(n^{4(H(t_0)-\gamma)}+n^{2(H(t_0)-\gamma)}+|p|^{2H(t_0)-2(L+1)}\right) \\
 &\leq \frac{1}{\upsilon_{\gamma,n}(t_0)^2}\sum\limits_{|p|\leq k_0,p\in\mathbb{Z}}(\upsilon_{\gamma,n}(t_0)-|p|)C\\
 &+\frac{C}{\upsilon_{\gamma,n}(t_0)}\sum\limits_{ k_0< |p|\leq \upsilon_{\gamma,n}(t_0),p\in\mathbb{Z}} \left(n^{4(H(t_0)-\gamma)}+n^{2(H(t_0)-\gamma)}+|p|^{2H(t_0)-2(L+1)}\right).
\end{split} 
 \end{align*}
 Combining with Lemma \ref{lem.1.19} and using the fact that $\upsilon_{\gamma,n}(t_0)=[2n^{1-\gamma}-K] $ or $[2n^{1-\gamma}-K]+1 $ depending on the parity of $[2n^{1-\gamma}-K]$, it follows that
 \begin{align*}
  \mathbb{E}(W_{t_0,n}(\beta)-M_{t_0,\beta})^2&\leq C\left(n^{4 (H(t_0)-\gamma)\wedge 2(H(t_0)-\gamma)}+ n^{\gamma-1}+n^{4(H(t_0)-\gamma)}+n^{2(H(t_0)-\gamma)}\right)\\
  &+\frac{C}{\upsilon_{\gamma,n}(t_0)}\sum\limits_{|p|\leq \upsilon_{\gamma,n}(t_0)}|p|^{2H(t_0)-2(L+1)}.
  \end{align*}
  Since $2H(t_0)-2(L+1)<-1$, from Lemma A.4 in \cite{DanIst2017}, one gets 
   $$\frac{1}{\upsilon_{\gamma,n}(t_0)}\sum\limits_{|p|\leq \upsilon_{\gamma,n}(t_0)p\in\mathbb{Z}}|p|^{2H(t_0)-2(L+1)}=O(\upsilon_{\gamma,n}(t_0)^{-1})=O(n^{\gamma-1}).$$
  Moreover $4 (H(t_0)-\gamma)\wedge 2(H(t_0)-\gamma)\leq 2(H(t_0)-\gamma)<0$ and $4(H(t_0)-\gamma)<2(H(t_0)-\gamma)$, then we can write
  \begin{align}\label{eq.46}
   \mathbb{E}(W_{t_0,n}(\beta)-M_{t_0,\beta})^2&\leq C \left(n^{\gamma-1}+n^{2(H(t_0)-\gamma)} \right)\notag\\
   &\leq C d_n^2
  \end{align}
  where $d_n$ is defined by (\ref{eq.15}).\\
     For $\epsilon >0$ fixed, using Markov's inequality, we get
 \begin{align}\label{eq.47}
 \mathbb{P}(|W_{t_0,n}(\beta)-M_{t_0,\beta}|>\epsilon)&\leq \frac{\mathbb{E}(W_{t_0,n}(\beta)-M_{t_0,\beta})^2}{\epsilon^2}\leq \frac{Cd_n^2}{\epsilon}.
 \end{align}
 Since $\lim\limits_{n\to+\infty}d_n=0$, it follows that
 \begin{align*}
\lim\limits_{n\to +\infty} W_{t_0,n}(\beta)&\stackrel{(\mathbb{P})}{=} M_{t_0,\beta}.
\end{align*}
From (\ref{eq.47}), one also gets
$$W_{t_0,n}(\beta)-M_{t_0,\beta}=O_\mathbb{P}(d_n).$$
\item 
Let $\phi: \mathbb{R}_+\times \mathbb{R}_+\rightarrow \mathbb{R}$ be defined by
\begin{align}\label{eq.48}
\phi(x,y)&=\frac{1}{\beta}\log_2\frac{x}{y}.
\end{align}
From the proof of the latter part, then one gets 
$$W_{t_0,n}(\beta)\stackrel{\mathbb{P}}{\rightarrow}M_{t_0,\beta},W_{t_0,n/2}(\beta)\stackrel{\mathbb{P}}{\rightarrow}M_{t_0,\beta} $$
 as $n\to+\infty$. \\
It follows that $(W_{t_0,n/2}(\beta),W_{t_0,n}(\beta))\stackrel{\mathbb{P}}{\rightarrow}(M_{t_0,\beta},M_{t_0,\beta})$ as $n\to+\infty$.
Since $\phi$ is continuous, using the continuous mapping theorem, it induces
\begin{align*}
\phi(W_{t_0,n/2}(\beta),W_{t_0,n}(\beta))&=\frac{1}{\beta}\log_2\frac{W_{t_0,n/2}(\beta)}{W_{t_0,n}(\beta)}\\
&=\frac{1}{\beta}\log_2\frac{V_{t_0,n/2}(\beta)}{V_{t_0,n}(\beta)}-H(t_0)\stackrel{\mathbb{P}}{\rightarrow}\phi(M_{t_0,\beta},M_{t_0,\beta})=0
\end{align*} 
  as $n\to+\infty$. Then $\lim\limits_{n\to+\infty}\widehat{H}_n(t_0)=H(t_0)$.\\
  Moreover, since $W_{t_0,n}(\beta)-M_{t_0,\beta}=O_\mathbb{P}(d_n),W_{t_0,n/2}(\beta)-M_{t_0,\beta}=O_\mathbb{P}(d_{n/2})= O_\mathbb{P}(d_n)$ and $\phi$ is differentiable at $(M_{t_0,\beta},M_{t_0,\beta})$, applying Lemma 4.10 in \cite{DanIst2017} 
  , one gets 
  $\widehat{H}_n-H(t_0)=O_\mathbb{P}(d_n).$
%
\end{enumerate}
  \end{proof}
\subsection{Proof of Theorem \ref{thm.2}}\label{Proof of Theorem 2}
We first present the following lemma.
\begin{lem}\label{lem.1.21}
There exist $n_1,k_0\in\mathbb{N}$ and $0<\eta<1$ such that for all $n\geq n_1, k,k'\in\nu_\gamma(t_0), |k-k'|>k_0$, we have 
\begin{align}
cov(|\triangle_{k,n}X|^\beta,|\triangle_{k',n}X|^\beta)&\leq C^*(\eta)n^{-2\beta H(t_0)+\alpha H(t_0)}\notag\\
&\times \left( n^{-\alpha \gamma}+n^{-\frac{\alpha(H(t_0)+\gamma)}{2}}+n^{-\alpha H(t_0)}|k-k'|^{\frac{\alpha H(t_0)-\alpha(L+1)}{2}}\right),\label{eq.49}
\end{align}
where $C^*(\eta)$ is a constant depending on $\eta$.
\end{lem}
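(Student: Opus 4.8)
\noindent\emph{Proof strategy.} The argument runs parallel to that of Lemma~\ref{lem.1.22}, with two changes dictated by $0<\alpha<2$: the Gaussian correlation bound (Lemma~A.1 of \cite{DanIst2017}) is replaced by the covariance estimate for jointly $S\alpha S$ random variables from the same reference, and the quadratic quantity $\big(\int_\mathbb{R}|f(k,n,s)f(k',n,s)|\,ds\big)^2$ is replaced by the \emph{linear} quantity $J:=\int_\mathbb{R}|f(k,n,s)\,f(k',n,s)|^{\alpha/2}\,ds$, where $f,g$ are the kernels of (\ref{eq.21})--(\ref{eq.22}) already used in the proof of Lemma~\ref{lem.1.22}. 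The plan is: (a) show $J\le C\big(n^{-\alpha\gamma}+n^{-\alpha(H(t_0)+\gamma)/2}+n^{-\alpha H(t_0)}|k-k'|^{\alpha(H(t_0)-(L+1))/2}\big)$; (b) use this to verify the non-degeneracy condition required by the $S\alpha S$ covariance estimate; (c) plug $J$ into that estimate and rescale.

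\smallskip
\noindent\emph{Step (a).} Since $0<\alpha/2<1$, the subadditivity $(x+y)^{\alpha/2}\le x^{\alpha/2}+y^{\alpha/2}$ (for $x,y\ge0$) together with $|f|\le|f-g|+|g|$ applied at both indices gives $J\le J_1+J_2+J_3+J_4$, where $J_1=\int|f_k-g_k|^{\alpha/2}|f_{k'}-g_{k'}|^{\alpha/2}$, $J_2=\int|f_k-g_k|^{\alpha/2}|g_{k'}|^{\alpha/2}$, $J_3=\int|g_k|^{\alpha/2}|f_{k'}-g_{k'}|^{\alpha/2}$, $J_4=\int|g_k|^{\alpha/2}|g_{k'}|^{\alpha/2}$. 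Applying Cauchy--Schwarz to $J_1,J_2,J_3$ and using $\int|f-g|^\alpha\le Cn^{-\alpha\gamma}$ from (\ref{eq.31}) and $\int|g|^\alpha=M_{t_0}^\alpha n^{-\alpha H(t_0)}$ from (\ref{eq.27}) yields $J_1\le Cn^{-\alpha\gamma}$ and $J_2,J_3\le Cn^{-\alpha(H(t_0)+\gamma)/2}$. For $J_4$, the changes of variables $s=s_1/n$ then $s_2=s_1-k$ (exactly as in the derivation of (\ref{eq.27})--(\ref{eq.38})) give $J_4=n^{-\alpha H(t_0)}\int_\mathbb{R}\phi(s_2)^{\alpha/2}\,\phi(s_2+k-k')^{\alpha/2}\,ds_2$, with $\phi(u)=\big|\sum_{p=0}^K a_p|p-u|^{H(t_0)-1/\alpha}\big|$. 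The function $\phi^{\alpha/2}$ is locally integrable (the singularities $u=p$ carry exponents $(\alpha H(t_0)-1)/2>-1$), and since $a$ has $L$ vanishing moments $\phi(u)\le C|u|^{H(t_0)-1/\alpha-(L+1)}$ for large $|u|$; splitting the last integral according to whether $s_2$ is close to $0$, close to $k'-k$, or away from both singularity clusters, and using $H(t_0)<L+1$, gives $J_4\le Cn^{-\alpha H(t_0)}|k-k'|^{\alpha(H(t_0)-(L+1))/2}$ for $|k-k'|$ large. This is the fractional-power analogue of (\ref{eq.38}) and can also be obtained along the lines of Lemma~3.6 in \cite{Jacques2012}.

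\smallskip
\noindent\emph{Steps (b)--(c).} Set $r_{k,k'}:=J/\big(\|\triangle_{k,n}X\|_\alpha^{\alpha/2}\|\triangle_{k',n}X\|_\alpha^{\alpha/2}\big)$. By (\ref{eq.27}) one has $\|\triangle_{k,n}X\|_\alpha^\alpha=n^{-\alpha H(t_0)}\sigma_{k,n}(t_0)^\alpha$, and by Remark~\ref{remark.1.3} there are $n_0\in\mathbb{N}$ and $0<M_1<M_{t_0}<M_2$ with $M_1<\sigma_{k,n}(t_0)<M_2$ for $n\ge n_0$ and $k\in\nu_{\gamma,n}(t_0)$. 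Combined with Step~(a), $r_{k,k'}\le C\big(n^{\alpha(H(t_0)-\gamma)}+n^{\alpha(H(t_0)-\gamma)/2}+|k-k'|^{\alpha(H(t_0)-(L+1))/2}\big)$; since $H(t_0)<\gamma<1$ and $\alpha(H(t_0)-(L+1))/2<0$, there exist $n_1\ge n_0$, $k_0\in\mathbb{N}$ and $\eta\in(0,1)$ such that $r_{k,k'}\le\eta$ whenever $n\ge n_1$, $k,k'\in\nu_{\gamma,n}(t_0)$ and $|k-k'|>k_0$. The $S\alpha S$ covariance estimate of \cite{DanIst2017} then supplies a constant $C^*(\eta)$ with $\big|cov\big(|\triangle_{k,n}X/\|\triangle_{k,n}X\|_\alpha|^\beta,|\triangle_{k',n}X/\|\triangle_{k',n}X\|_\alpha|^\beta\big)\big|\le C^*(\eta)\,r_{k,k'}$. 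Multiplying by $\|\triangle_{k,n}X\|_\alpha^\beta\|\triangle_{k',n}X\|_\alpha^\beta$, using once more $M_1<\sigma_{k,n}(t_0)<M_2$ (and $\beta-\alpha/2<0$), and finally inserting the bound on $J$ from Step~(a), produces (\ref{eq.49}).

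\smallskip
\noindent\textbf{Main obstacle.} The delicate points are the large-lag decay of $J_4$ --- in the Gaussian case $\int|g_kg_{k'}|$ is handled by the ready-made Lemma~3.6 of \cite{Jacques2012}, whereas here the $(\alpha/2)$-th powers force one to balance the singular part and the tail of $\phi$ against $|k-k'|$ directly --- and the invocation of the $S\alpha S$ covariance inequality in the form $|cov(\cdot,\cdot)|\le C^*(\eta)\,r_{k,k'}$, which is available only once the normalized codependence $r_{k,k'}$ has been shown to stay below $1$; that is precisely what the preliminary estimate in Step~(b) achieves.
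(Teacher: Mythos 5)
Your proposal is correct and follows essentially the same route as the paper: the same decomposition $J\le J_1+J_2+J_3+J_4$ of $\int|f(k,n,s)f(k',n,s)|^{\alpha/2}\,ds$ via subadditivity of $x\mapsto x^{\alpha/2}$, the same Cauchy--Schwarz bounds from (\ref{eq.27}) and (\ref{eq.31}), the same appeal to Lemma~3.6 of \cite{Jacques2012} for the large-lag term, and the same normalized quantity (the paper's $\eta_{k,k'}$, your $r_{k,k'}$) fed into the $S\alpha S$ covariance estimate of \cite{DanIst2017} before rescaling by the $\alpha$-norms. No substantive differences to report.
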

\begin{proof}
Let  
\begin{align*}
I&=\int\limits_\mathbb{R}\left|f(k,n,s)f(k',n,s)\right|^{\frac{\alpha}{2}}ds,
\end{align*}
where $f(k,n,s)$ is defined by (\ref{eq.21})
 Since $0<\alpha/2<1$, following Lemma 2.7.13 in \cite{Taqqu1994}, one gets
 \begin{align*}
 |f(k,n,s)|^{\alpha/2}&\leq |f(k,n,s)-g(k,n,s)|^{\alpha/2}+|g(k,n,s)|^{\alpha/2},\\
 |f(k',n,s)|^{\alpha/2}&\leq |f(k',n,s)-g(k',n,s)|^{\alpha/2}+|g(k',n,s)|^{\alpha/2}.
 \end{align*}
 It follows that
\begin{align}
I&\leq \int\limits_\mathbb{R}\left(|f(k,n,s)-g(k,n,s)|^{\alpha/2}+|g(k,n,s)|^{\alpha/2}\right)\notag\\
&\times\left(|f(k',n,s)-g(k',n,s)|^{\alpha/2}+|g(k',n,s)|^{\alpha/2}\right)ds \notag\\
&:=I_1+I_2+I_3+I_4,\label{eq.50}
\end{align}
where
\begin{align*}
I_1&=\int\limits_\mathbb{R}|f(k,n,s)-g(k,n,s)|^{\alpha/2}|f(k',n,s)-g(k',n,s)|^{\alpha/2}ds\\
I_2&=\int\limits_\mathbb{R}|f(k,n,s)-g(k,n,s)|^{\alpha/2}|g(k',n,s)|^{\alpha/2}ds\\
I_3&=\int\limits_\mathbb{R}|f(k',n,s)-g(k',n,s)|^{\alpha/2}|f(k,n,s)|^{\alpha/2}ds\\
I_4&=\int\limits_\mathbb{R}|g(k,n,s)|^{\alpha/2}|g(k',n,s)|^{\alpha/2}ds.
\end{align*}
Now we will find the bound for each term $I_1, I_2,I_3, I_4$. 
For $I_4$, we have 
\begin{align*}
I_4&=\int\limits_\mathbb{R}|g(k,n,s)|^{\alpha/2}|g(k',n,s)|^{\alpha/2}ds\\
&=\int\limits_\mathbb{R}\left| \sum\limits_{p=0}^Ka_p|\frac{k+p}{n}-s|^{H(u)-1/\alpha} \right|^{\alpha/2} \left| \sum\limits_{p'=0}^Ka_{p'}|\frac{k'+p'}{n}-s|^{H(u)-1/\alpha} \right|^{\alpha/2} ds.
\end{align*}
Making the change of variable $s=s_1/n$, it induces
\begin{align*}
I_4&=n^{-\alpha H(t_0)}\int\limits_\mathbb{R}\left| \sum\limits_{p=0}^Ka_p|k+p-s_1|^{H(t_0)-1/\alpha} \right|^{\alpha/2}\left| \sum\limits_{p'=0}^Ka_{p'}|k'+p'-s_1|^{H(u)-1/\alpha} \right|^{\alpha/2} ds_1.
\end{align*}
Let $s_2=s_1-k$, then
\begin{align*}
I_4&=n^{-\alpha H(t_0)}\int\limits_\mathbb{R}\left| \sum\limits_{p=0}^Ka_p|p-s_2|^{H(t_0)-1/\alpha} \right|^{\alpha/2}\left| \sum\limits_{p'=0}^Ka_{p'}|p'-(s_2+k-k'|^{H(t_0)-1/\alpha} \right|^{\alpha/2} ds_2.
\end{align*}
Following Lemma 3.6 in \cite{Jacques2012}, then there exists $K>0$ such that for $|k-k'|\geq K$, one has
\begin{align}\label{eq.51}
I_4&\leq C n^{-\alpha H(t_0)}|k-k'|^{\frac{\alpha H(t_0)-\alpha(L+1)}{2}}.
\end{align}
To find a bound for $I_2$, one can apply Cauchy-Schwarz's inequality and obtains
\begin{align*}
I_2&\leq \left(\int\limits_\mathbb{R}|f(k,n,s)-g(k,n,s)|^\alpha ds\int\limits_\mathbb{R}|g(k',n,s)|^\alpha ds|\right)^{1/2}.
\end{align*}
Moreover, from (\ref{eq.27}) and (\ref{eq.31}), we have
\begin{align*}
\int\limits_\mathbb{R}|g(k',n,s)|^\alpha ds
&=M_{t_0}^\alpha n^{-\alpha H(t_0)}.\\
\int\limits_\mathbb{R}|f(k,n,s)-g(k,n,s)|^\alpha ds&\leq Cn^{-\alpha\gamma}.
\end{align*}
Thus 
\begin{align}\label{eq.52}
I_2&\leq Cn^{-\frac{\alpha(H(t_0)+\gamma)}{2}}
\end{align}
Similarly, one also gets 
\begin{align}\label{eq.53}
I_3\leq Cn^{-\frac{\alpha(H(t_0)+\gamma)}{2}}.
\end{align}
 For $I_1$, applying Cauchy-Schwartz's inequality, then
\begin{align}
I_1&\leq \left(\int\limits_\mathbb{R}|f(k,n,s)-g(k,n,s)|^\alpha ds\int\limits_\mathbb{R}|f(k',n,s)-g(k',n,s)|^\alpha ds|\right)^{1/2}\leq Cn^{-\gamma\alpha}.\label{eq.54}
\end{align}
Combining with (\ref{eq.51}), (\ref{eq.52}), (\ref{eq.52}), one gets
\begin{align}\label{eq.55}
I\leq C\left(n^{-\alpha\gamma}+n^{-\frac{\alpha(H(t_0)+\gamma)}{2}}+n^{-\alpha H(t_0)}|k-k'|^{\frac{\alpha H(t_0)-\alpha(L+1)}{2}}\right).
\end{align}
Let 
  \begin{align}
  \eta_{k,k'}&=\left[\frac{\triangle_{k,n}X}{||\triangle_{k,n}X||_\alpha},\frac{\triangle_{k',n}X}{||\triangle_{k',n}X||_\alpha}\right]_2,\label{eq.56}
 \end{align}
 where $$\left[\int\limits_\mathbb{R}f(s)M_\alpha(ds), \int\limits_\mathbb{R}g(s)M_\alpha(ds)\right]_2=\int\limits_\mathbb{R}|f(s)g(s)|^{\alpha/2}ds.$$ 
One has
\begin{align*}
\eta_{k,k'}&=\frac{\int\limits_{\mathbb{R}}|f(k,n,s)f(k',n,s)|^{\alpha/2}ds}{\left(\int\limits_{\mathbb{R}}|f(k,n,s)|^{\alpha}ds\int\limits_{\mathbb{R}}|f(k',n,s)|^{\alpha}\right)^{1/2}ds}.
\end{align*}
For $n\geq n_0$, from Lemma \ref{lem.1.18} then
$$M_1<\sigma_{k,n}(t_0)<M_2, \int\limits_{\mathbb{R}}|f(k,n,s)|^\alpha ds =n^{-\alpha H(t_0)}\sigma_{k,n}^\alpha(u). $$
 Combining with (\ref{eq.55}), it follows that 
\begin{align*}
\eta_{k,k'}&\leq C\frac{n^{-\alpha\gamma}+n^{-\frac{\alpha(H(t_0)+\gamma)}{2}}+n^{\alpha H(t_0)}|k-k'|^{\frac{\alpha H(t_0)-\alpha(L+1)}{2}}}{n^{-\alpha H(t_0)}}\\
&=C\left(n^{\alpha(H(t_0)-\gamma)}+n^{\frac{\alpha(H(t_0)-\gamma)}{2}}+|k-k'|^{\frac{\alpha H(t_0)-\alpha(L+1)}{2}}\right).
\end{align*}
Since $H(t_0)<\gamma<1$ and $\frac{\alpha H(t_0)-\alpha (L+1)}{2}$ then there exist $n_1,k_0\in\mathbb{N}, n_1\geq n_0$ and $\eta$ such that for $n\geq n_1, |k-k'|> k_0$, we have 
$
0<\eta_{k,k'}\leq\eta<1.
$ Following Theorem 4.2 in \cite{DanIst2017}, one has
\begin{align*}
\left|cov\left(\left|\frac{\triangle_{k,n}X}{||\triangle_{k,n}X||_\alpha}\right|^\beta,\left|\frac{\triangle_{k',n}X}{||\triangle_{k',n}X||_\alpha}\right|^\beta\right)\right|&\leq C(\eta)\int\limits_\mathbb{R} \left|\frac{f(k,n,s)f(k',n,s}{||\triangle_{k,n}X||_\alpha||\triangle_{k',n}X||_\alpha}\right|^{\frac{\alpha}{2}}ds,
\end{align*}
where $C(\eta)$ is a constant depending on $\eta$.
Thus, one obtains
\begin{align*}
\left|cov(|\triangle_{k,n}X|^\beta,|\triangle_{k',n}X|^\beta)\right|&\leq C(\eta)\left(\int\limits_\mathbb{R}|f(k,n,s)|^\alpha ds\right)^{\frac{\beta}{\alpha}-\frac{1}{2}}\left(\int\limits_\mathbb{R}|f(k',n,s)|^\alpha ds\right)^{\frac{\beta}{\alpha}-\frac{1}{2}}\\
&\times\int\limits_\mathbb{R}\left|f(k,n,s)f(k',n,s)\right|^{\frac{\alpha}{2}}ds.
\end{align*}
Moreover, from (\ref{eq.55}) and Lemma \ref{lem.1.18}, using the fact that $ \frac{\beta}{\alpha}-\frac{1}{2}<0 $ and there exists $M_1>0$ such that 
$$M_1<\sigma_{k,n}(t_0), M_1<\sigma_{k',n}(t_0),$$  one has
\begin{align*}
\int\limits_\mathbb{R}|f(k,n,s)|^\alpha ds&=n^{-\alpha H(t_0)}\sigma_{k,n}^\alpha(t_0)> M_1^{\alpha}n^{-\alpha H(t_0)},\\
\int\limits_\mathbb{R}|f(k',n,s)|^\alpha ds&=n^{-\alpha H(t_0)}\sigma_{k',n}^\alpha(t_0)> M_1^{\alpha}n^{-\alpha H(t_0)}.
\end{align*}
 It follows that
\begin{align*}
\left|cov(|\triangle_{k,n}X|^\beta,|\triangle_{k',n}X|^\beta)\right|&\leq C^*(\eta)\left(M_1^\alpha n^{-\alpha H(t_0)}\right)^{2\left(\frac{\beta}{\alpha}-\frac{1}{2}\right)}\int\limits_\mathbb{R}\left|f(k,n,s)f(k',n,s)\right|^{\frac{\alpha}{2}}ds\\
&\leq C^*(\eta)n^{-2\beta H(t_0)+\alpha H(t_0)}\\
&\times \left(  n^{-\alpha\gamma}+n^{-\frac{\alpha(H(t_0)+\gamma)}{2}}+n^{-\alpha H(t_0)}|k-k'|^{\frac{\alpha H(t_0)-2\alpha}{2}} \right), 
\end{align*}
where $C^*(\eta)$ is a constant depending on $\eta$.
\end{proof}
\begin{proof}[\emph{\bf{Proof of Theorem \ref{thm.2}.}}]
\begin{enumerate}
\item One gets
\begin{align*}
\mathbb{E}(W_{t_0,n}(\beta)-M_{t_0,\beta})^2&= \mathbb{E}W_{t_0,n}^2(\beta)-\left(\mathbb{E}W_{t_0,n}(\beta)\right)^2+\left(\mathbb{E}W_{t_0,n}(\beta)-M_{t_0,\beta} \right)^2.
\end{align*}
Applying Lemma \ref{lem.1.20} and Lemma \ref{lem.1.21}, for $n\geq n_1$ where $n_1$ and $k_0$ are defined as in Lemma \ref{lem.1.21},  we have
\begin{align*}
\begin{split}
&\mathbb{E}W_{t_0,n}^2(\beta)-\left(\mathbb{E}W_{t_0,n}(\beta)\right)^2\\
&=\frac{1}{\upsilon_{\gamma,n}(t_0)^2}\sum\limits_{k,k'\in \nu_{\gamma,n}(t_0)}cov(\left|\frac{\triangle_{k,n}X}{n^{-H(t_0)}}\right|^\beta,\left|\frac{\triangle_{k',n}X}{n^{-H(t_0)}}\right|^\beta)\\
&\leq \frac{1}{\upsilon_{\gamma,n}(t_0)^2}\sum\limits_{k,k'\in \nu_{\gamma,n}(t_0), |k-k'|\leq k_0}\left|cov\left(\left|\frac{\triangle_{k,n}X}{n^{-H(t_0)}}\right|^\beta,\left|\frac{\triangle_{k',n}X}{n^{-H(t_0)}}\right|^\beta\right)\right|\\
&+\frac{n^{2\beta H(t_0)}}{\upsilon_{\gamma,n}(t_0)^2}\sum\limits_{k,k'\in \nu_{\gamma,n}(t_0),|k-k'|> k_0}\left|cov\left(\left|\triangle_{k,n}X|^\beta,|\triangle_{k',n}X\right|^\beta\right)\right|\\
&\leq \frac{1}{\upsilon_{\gamma,n}(t_0)^2}\sum\limits_{|p|\leq k_0, p\in\mathbb{Z}}(\upsilon_{\gamma,n}(t_0)-|p|)C\\
&+ \frac{n^{2\beta H(t_0)}}{\upsilon_{\gamma,n}(t_0)^2}
 \sum\limits_{ k_0< |p|\leq \upsilon_{\gamma,n}(t_0), p\in\mathbb{Z}} (\upsilon_{\gamma,n}(t_0)-|p|) C^*(\eta) n^{-2\beta H(t_0)+\alpha H(t_0)}\\
 &\times \left(n^{-\alpha\gamma}+n^{-\frac{\alpha(H(t_0)+\gamma)}{2}}+n^{-\alpha H(t_0)}|p|^{\frac{\alpha H(t_0)-\alpha(L+1)}{2}}\right) \\
 &\leq \frac{1}{\upsilon_{\gamma,n}(t_0)^2}\sum\limits_{|p|\leq k_0, p\in\mathbb{Z}}(\upsilon_{\gamma,n}(t_0)-|p|)C\\
 &+\frac{C^*(\eta)}{\upsilon_{\gamma,n}(t_0)}\sum\limits_{ k_0< |p|\leq \upsilon_{\gamma,n}(t_0),p\in\mathbb{Z}} \left( n^{\alpha(H(t_0)-\gamma)}+n^{\frac{\alpha(H(t_0)-\gamma)}{2}}+|p|^{\frac{\alpha H(t_0)-\alpha(L+1)}{2}} \right).
 \end{split}
 \end{align*}
 Combining with Lemma \ref{lem.1.19} and the fact that $\upsilon_{\gamma,n}(t_0)=[2n^{1-\gamma}-K] $ or $[2n^{1-\gamma}-K]+1 $ depending on the parity of $[2n^{1-\gamma}-K]$, it follows that
 \begin{align*}
  \mathbb{E}(W_{t_0,n}(\beta)-M_{t_0,\beta})^2&
  \leq C\left(n^{2\alpha (H(t_0)-\gamma)\wedge 2(H(t_0)-\gamma)}+ n^{\gamma-1}+n^{\alpha(H(t_0)-\gamma)}+n^{\frac{\alpha(H(t_0)-\gamma)}{2}}\right)\\
  &+\frac{C}{\upsilon_{\gamma,n}(t_0)}\sum\limits_{|p|\leq \upsilon_{\gamma,n}(t_0)}|p|^{\frac{\alpha H(t_0)-\alpha(L+1)}{2}}. 
   \end{align*}
 From Lemma A.4 in \cite{DanIst2017}, one gets
\begin{align*}
   &\frac{1}{\upsilon_{\gamma,n}(t_0)}\sum\limits_{|p|\leq \upsilon_{\gamma,n}(t_0),p\in\mathbb{Z}}|p|^{\frac{\alpha H(t_0)-\alpha (L+1)}{2}}\\
   &=\begin{cases}
   O\left(\upsilon_{\gamma,n}(t_0)^{-1}\right)=O\left(n^{\gamma-1}\right)& \text{ if  } \frac{\alpha H(t_0)-\alpha(L+1)}{2}<-1,\\
    O\left(\upsilon_{\gamma,n}(t_0)^ \frac{\alpha H(t_0)-\alpha(L+1)}{2}\right)=O\left(n^{\frac{(1-\gamma)(\alpha H(t_0)-\alpha(L+1))}{2}}\right)& \text{ if  }  -1<\frac{\alpha H(t_0)-\alpha(L+1)}{2}<0,\\
     O\left(\frac{\ln(\upsilon_{\gamma,n}(t_0))}{\upsilon_{\gamma,n}(t_0)}\right)=O\left(n^{\gamma-1}\ln n\right)& \text{ if  }  \frac{\alpha H(t_0)-\alpha(L+1)}{2}=-1.
   \end{cases}
    \end{align*} 
  Combining with the fact that $2\alpha(H(t_0)-\gamma)\wedge 2(H(t_0)-\gamma)\leq \frac{\alpha(H(t_0)-\gamma)}{2}<0$ and $\alpha(H(t_0)-\gamma)<\frac{\alpha(H(t_0)-\gamma)}{2}<0$, with $d_n$ defined by (\ref{eq.18}), we can write
  \begin{align}\label{eq.57}
   \mathbb{E}(W_{t_0,n}(\beta)-M_{t_0,\beta})^2&\leq C \left(n^{\gamma-1}+n^{\frac{\alpha(H(t_0)-\gamma)}{2}}+ \frac{1}{\upsilon_{\gamma,n}(t_0)}\sum\limits_{|p|\leq \upsilon_{\gamma,n}(t_0),p\in\mathbb{Z}}|p|^{\frac{\alpha H(t_0)-\alpha (L+1)}{2}}\right)\notag\\
   &\leq C d_n^2.
  \end{align}
   It is obvious that $\lim\limits_{n\to+\infty}d_n=0$.
     For $\epsilon >0$ fixed, using Markov's inequality, we get
 \begin{align*}
 \mathbb{P}(|W_{t_0,n}(\beta)-M_{t_0,\beta}|>\epsilon)&\leq \frac{\mathbb{E}(W_{t_0,n}(\beta)-M_{t_0,\beta})^2}{\epsilon^2}\leq \frac{Cd_n^2}{\epsilon^2}.
 \end{align*}
 It follows that
 \begin{align*}
\lim\limits_{n\to +\infty} W_{t_0,n}(\beta)&\stackrel{(\mathbb{P})}{=} M_{t_0,\beta}
\end{align*}
and
$$W_{t_0,n}(\beta)-M_{t_0,\beta}=O_\mathbb{P}(d_n).$$
\item One can use the same function $\phi$ as in (\ref{eq.48}) and the similar way to the proof of Theorem \ref{thm.1} to get the conclusion.
  \end{enumerate}
  \end{proof}
\subsection{Proof of Theorem \ref{thm.3}}\label{Proof of Theorem 3}
For $t_0\in U$ fixed, we will prove that $\psi_{-\beta_1,-\beta_2}(V_{t_0,n}(\beta_1),V_{t_0,n}(\beta_2)) \stackrel{\mathbb{P}}{\rightarrow} 
h_{-\beta_1,-\beta_2}(\alpha)$ as $n\to+\infty$.\\
From Lemma 4.2 in \cite{DanIst2017}, one gets
\begin{align*}
M_{t_0,\beta_1}^{\beta_2}&=\left(\frac{M_{t_0}^{\beta_1}C_{\beta_1}}{\sqrt{2\pi}}\int\limits_\mathbb{R}\frac{e^{-|y|^\alpha}}{|y|^{1+\beta_1}}dy\right)^{\beta_2}\\
&=\left(\frac{M_{t_0}^{\beta_1}2^{\beta_1}\Gamma(\frac{\beta_1+1}{2})\Gamma(1-\frac{\beta_1}{\alpha})}{\sqrt{\pi}\Gamma(1-\frac{\beta_1}{2})}\right)^{\beta_2}\\
&=M_{t_0}^{\beta_1\beta_2}2^{\beta_1\beta_2}\left(\frac{\Gamma(\frac{\beta_1+1}{2})\Gamma(1-\frac{\beta_1}{\alpha})}{\sqrt{\pi}\Gamma(1-\frac{\beta_1}{2})}\right)^{\beta_2}
\end{align*}
where $M_{t_0}, M_{t_0,\beta}, C_\beta$ are defined by (\ref{eq.24}), (\ref{eq.34}) and (\ref{eq.35}), respectively.\\
Similarly, one also has
\begin{align*}
M_{t_0,\beta_2}^{\beta_1}&=M_{t_0}^{\beta_1\beta_2}2^{\beta_1\beta_2}\left(\frac{\Gamma(\frac{\beta_2+1}{2})\Gamma(1-\frac{\beta_2}{\alpha})}{\sqrt{\pi}\Gamma(1-\frac{\beta_2}{2})}\right)^{\beta_1}.
\end{align*}
Then
\begin{align*}
\frac{M_{t_0,\beta_1}^{\beta_2}}{M_{t_0,\beta_2}^{\beta_1}}&=\frac{\pi^{\frac{\beta_1-\beta_2}{2}} \Gamma^{\beta_1}(1-\frac{\beta_2}{2}) \Gamma^{\beta_2}(\frac{\beta_1+1}{2}) \Gamma^{\beta_2}(1-\frac{\beta_1}{\alpha})}{\Gamma^{\beta_2}(1-\frac{\beta_1}{2})
 \Gamma^{\beta_1}(\frac{\beta_2+1}{2})\Gamma^{\beta_1}(1-\frac{\beta_2}{\alpha})}.
\end{align*}
Taking the natural logarithm, it follows that
\begin{align*}
&\beta_2\ln(M_{t_0,\beta_1})-\beta_1\ln(M_{t_0,\beta_2})\\
&=\frac{\beta_1-\beta_2}{2}\ln (\pi)+\beta_1 \ln  \Gamma(1-\frac{\beta_2}{2})+
\beta_2 \ln \Gamma(\frac{\beta_1+1}{2})+\beta_2\ln \Gamma(1-\frac{\beta_1}{\alpha})\\
&-\beta_2 \ln \Gamma(1-\frac{\beta_1}{2})-\beta_1 \ln \Gamma(\frac{\beta_2+1}{2})
-\beta_1\ln\Gamma(1-\frac{\beta_2}{\alpha}).
\end{align*}
Thus, one gets
\begin{align*}
&\beta_2\ln \Gamma(1-\frac{\beta_1}{\alpha})- 
 \beta_1\ln\Gamma(1-\frac{\beta_2}{\alpha})\\
 &=
 \beta_2\ln(M_{u,\beta_1})-\beta_1\ln(M_{u,\beta_2})
 +\frac{\beta_2-\beta_1}{2}\ln (\pi)-
 \beta_1 \ln \Gamma(1-\frac{\beta_2}{2})-\beta_2 \ln \Gamma(\frac{\beta_1+1}{2})\\
 &+\beta_2 \ln\Gamma(1-\frac{\beta_1}{2})
 +\beta_1 \ln \Gamma(\frac{\beta_2+1}{2}).
\end{align*}
Then
$$
h_{-\beta_1,-\beta_2}(\alpha)=\psi_{-\beta_1,-\beta_2}
 (M_{t_0,\beta_1}, M_{t_0,\beta_2}),
$$
where the function $ \psi_{u,v}, h_{u,v}$ are defined by  (\ref{eq.11}) and (\ref{eq.12}) respectively.\\
From Lemma 4.11 in \cite{DanIst2017}, it follows that $h_{u,v}$ is a strictly increasing function on $(0,+\infty)$ and 
$$\lim\limits_{x\rightarrow+\infty}h_{u,v}(x)=0, \lim\limits_{x\rightarrow 0}h_{u,v}(x)=-\infty .$$ Furthermore, there exists an inverse function $$h_{u,v}^{-1}: (-\infty,0)\rightarrow (0,+\infty)$$ which is continuous and differentiable on $(-\infty, 0)$.\\
In addition, 
\begin{align*}
\psi_{-\beta_1,-\beta_2}(W_{u,n}(\beta_1),W_{u,n}(\beta_2))=\psi_{-\beta_1,-\beta_2}(V_{u,n}(\beta_1),V_{u,n}(\beta_2)).
\end{align*}
and since $\alpha \in (0,2]$, one gets
$$h_{-\beta_1,-\beta_2}(\alpha)=\psi_{-\beta_1,-\beta_2}
(M_{t_0,\beta_1}, M_{t_0,\beta_2})<0.$$
Then
\begin{align}
 \widehat{\alpha}_n-\alpha&=\varphi_{-\beta_1,-\beta_2}\left(\psi_{-\beta_1,-\beta_2}(V_{t_0,n}(\beta_1),V_{t_0,n}(\beta_2))\right)
 -h^{-1}_{-\beta_1,-\beta_2}\left(h_{-\beta_1,-\beta_2}(\alpha)\right) \notag\\
 &=\varphi_{-\beta_1,-\beta_2}\left(\psi_{\beta_1,\beta_2}(W_{t_0,n}(\beta_1),W_{t_0,n}(\beta_2))\right)
 -\varphi_{-\beta_1,-\beta_2}\left(h_{-\beta_1,-\beta_2}(\alpha) \right) \notag\\
 &=\varphi_{-\beta_1,-\beta_2}\left(\psi_{-\beta_1,-\beta_2}(W_{t_0,n}(\beta_1),W_{t_0,n}(\beta_2))\right) \notag\\
 &-\varphi_{-\beta_1,-\beta_2}(\psi_{-\beta_1,-\beta_2}(M_{t_0,\beta_1},M_{t_0,\beta_2})). \label{eq.58}
\end{align}
 Moreover, from Theorem \ref{thm.1} and Theorem \ref{thm.2}, we have   
$
W_{t_0,n}(\beta_1)\xrightarrow{\mathbb{P}} M_{t_0,\beta_1},
 W_{t_0,n}(\beta_2)\xrightarrow{\mathbb{P}} M_{t_0,\beta_2}
 $. It follows that  
 $$(W_{t_0,n}(\beta_1),W_{t_0,n}(\beta_2))\xrightarrow{\mathbb{P}} 
 (M_{t_0,\beta_1},M_{t_0,\beta_2})$$
 as $n\rightarrow +\infty$. It is obvious that $\psi_{-\beta_1,-\beta_2}$ is a continuous function. Applying the continuous mapping theorem, it induces that
 $$\lim\limits_{n\rightarrow+\infty}\widehat{\alpha}_n\stackrel{\mathbb{P}}{=}\alpha$$.
 We also have 
 $$ W_{t_0,n}(\beta_1)- M_{t_0,\beta_1}=O_{\mathbb{P}(d_n)},
 W_{t_0,n}(\beta_2)- M_{t_0,\beta_2}=O_{\mathbb{P}(d_n)}
 $$
 where $d_n$ is defined by (\ref{eq.15}) for multifractional Brownian motion and by (\ref{eq.18}) for linear multifractional stable motion. Combining with (\ref{eq.58}) and the fact that $\varphi_{-\beta_1,-\beta_2}\circ \psi_{-\beta_1,-\beta_2}$ is differentiable at $ 
(M_{t_0,\beta_1},M_{t_0,\beta_2}),$  
we apply Lemma 4.10 in \cite{DanIst2017} and get the conclusion.
\section*{Acknowledgements}
We would like to thank Jacques Istas for his helpful comments. 
 %
%
\bibliographystyle{plain}
\bibliography{References}

\begin{thebibliography}{10}

\bibitem{Ayache2015}
A.~Ayache and J.~Hamonier.
\newblock Linear multifractional stable motion: wavelet estimation of {H}(.)
  and $\alpha$ parameter.
\newblock {\em Lithuanian {M}athematical {J}ournal}, 55(2):159--192, 2015.

\bibitem{Ayache2017}
A.~Ayache and J.~Hamonier.
\newblock Linear {M}ultifractional {S}table {M}otion: an a.s. uniformly
  convergent estimator for {H}urst function.
\newblock {\em Bernoulli}, 23(2):1365--1407, 2017.

\bibitem{Ayache2007}
A.~Ayache, S.~Jaffard, and M.S. Taqqu.
\newblock Wavelet construction of generalized multifractional processes.
\newblock {\em Revista {M}atem\'atica {I}beroamericana}, 23(1):327--370, 2007.

\bibitem{Ayache2004}
A.~Ayache and J.~L\'evy V\'ehel.
\newblock On the identification of the pointwise {H}\"older exponent of the
  generalized multifractional {B}rownian motion.
\newblock {\em Stochastic Processes and their Applications}, 111(1):119--156,
  2004.

\bibitem{Bardet2013}
J.M. Bardet and D.~Surgailis.
\newblock Nonparametric estimation of the local {H}urst function of
  multifractional processes.
\newblock {\em Stochastic {P}rocesses and their {A}pplications},
  123(3):1004--1045, 2013.

\bibitem{Benassi2000}
A.~Benassi, P.~Bertrand, S.~Cohen, and J.~Istas.
\newblock Identification of the {H}urst index of a step fractional {B}rownian
  motion.
\newblock {\em Statistical Inference for Stochastic Processes}, 3(1):101--111,
  2000.

\bibitem{Benassi1998}
A.~Benassi, S.~Cohen, and J.~Istas.
\newblock Identifying the multifractional function of a {G}aussian process.
\newblock {\em Statistics and Probability Letters}, 39(4):337--345, 1998.

\bibitem{Benassi1997}
A.~Benassi, S.~Jaffard, and D.~Roux.
\newblock Gaussian processes and pseudodifferential elliptic operators.
\newblock {\em Revista {M}athematica {I}beroamericana}, 13(1):19--90, 1997.

\bibitem{Coeurjolly2005}
J.F. Coeurjolly.
\newblock Identification of multifractional {B}rownian motion.
\newblock {\em Bernoulli}, 11(6):987--1008, 2005.

\bibitem{Coeurjolly2006}
J.F. Coeurjolly.
\newblock Erratum: {I}dentification of multifractional {B}rownian motion.
\newblock {\em Bernoulli}, 12(2):381--382, 2006.

\bibitem{Cohen2013}
S.~Cohen and J.~Istas.
\newblock {\em Fractional fields and applications}.
\newblock Springer-{V}erlag, Berlin, 2013.

\bibitem{DanIst2017}
T.T.N. Dang and J.~Istas.
\newblock Estimation of the {H}urst and the stability indices of a
  {H}-self-similar stable process.
\newblock {\em Electronic Journal of Statistics}, 11(2):4103--4150, 2017.

\bibitem{Embrechts2002}
P.~Embrechts and M.~Maejima.
\newblock {\em Self-similar processes}.
\newblock Princeton {U}niversity {P}ress, Princeton, {N}{J}, 2002.

\bibitem{Falconer2009}
K.J. Falconer, R.~Le~Gu\'evel, and J.~L\'evy~V\'ehel.
\newblock Localizable moving average symmetric stable and multistable
  processes.
\newblock {\em Stochastic {M}odels}, 25(4):648--672, 2009.

\bibitem{Vehel2009}
K.J. Falconer and J.~L\'evy~V\'ehel.
\newblock Multifractional, multistable, and other processes with prescribed
  local form.
\newblock {\em Journal of {T}heoretical {P}robability}, 22(2):375--401, 2009.

\bibitem{Jacques2012}
J.~Istas.
\newblock Estimating self-similarity through complex variations.
\newblock {\em Electronic {J}ournal of {S}tatistics}, 6:1392--1408, 2012.

\bibitem{Lacaux2004}
C.~Lacaux.
\newblock Real harmonizable multifractional {L}\'evy motions.
\newblock {\em Annal de l'{I}nstitut {H}enry {P}oincar\'e ({B}) {P}robability
  and {S}tatistics}, 40(3):259--277, 2004.

\bibitem{Guevel2013}
R.~Le~Gu\'evel.
\newblock An estimation of the stability and the localisability functions of
  multistable processes.
\newblock {\em Electronic {J}ournal of {S}tatistics}, 7:1129--1166, 2013.

\bibitem{Yimin2008}
M.~Meerschaert, D.~Wu, and Y.~Xiao.
\newblock Local times of multifractional {B}rownian sheets.
\newblock {\em Bernoulli}, 14(3):865--898, 2008.

\bibitem{Peltier1995}
R.F. Peltier and J.~L\'evy~V\'ehel.
\newblock Multifractional {B}rownian motion: definition and preliminary
  results.
\newblock Technical Report 2645, Rapport de recherche de l'INRIA, 1995.

\bibitem{Taqqu1994}
G.~Samorodnitsky and M.S. Taqqu.
\newblock {\em Stable {N}on-{G}aussian {R}andom {P}rocesses}.
\newblock Chapmann and {H}all, New {Y}ork, 1994.

\bibitem{Stoev2002}
S.~Stoev, V.~Pipiras, and M.S. Taqqu.
\newblock Estimation of the self-similarity parameter in linear fractional
  stable motion.
\newblock {\em Signal {P}rocessing}, 82:1873--1901, 2002.

\bibitem{Stoev2004}
S.~Stoev and M.S. Taqqu.
\newblock Stochastic properties of the linear multifractional stable motion.
\newblock {\em Advances in {A}pplied {P}robability}, 36(4):1085--1115, 2004.

\bibitem{Stoev2006}
S.~Stoev and M.S. Taqqu.
\newblock How rich is the class of multifractional {B}rownian motions?
\newblock {\em Stochastic {P}rocesses and their {A}pplications},
  116(2):200--221, 2006.

\end{thebibliography}
\end{document}